\documentclass{amsart}

\usepackage{amsmath, amsthm, amsfonts}
\usepackage{amssymb}
\usepackage[utf8]{inputenc}
\usepackage{latexsym}
\usepackage{verbatim}
\usepackage{url}
\usepackage{textcomp}
\usepackage[all,cmtip]{xy}
\usepackage{color}
\usepackage{hyperref}
\usepackage{tabularx}
\input{xypic}
\usepackage{enumerate}
\usepackage{amssymb}
\usepackage{array}


\DeclareMathAlphabet{\mathfr}{U}{euf}{m}{n}

\newtheorem{theorem}{Theorem}
\newtheorem*{theorem*}{Theorem}

\newtheorem{proposition}[theorem]{Proposition}
\newtheorem{corollary}[theorem]{Corollary}

\theoremstyle{remark}
\newtheorem{remark}[theorem]{Remark}
\newtheorem{definition}[theorem]{Definition}


\newcommand{\Q}{\mathbb Q}
\newcommand{\Qbar}{{\overline{\mathbb Q}}}

\newcommand{\Gal}{\mathrm{Gal}}

\newcommand{\Z}{\mathbb Z}

\newcommand{\C}{\mathbb C}

\newcommand{\GL}{\mathrm{GL}}
\newcommand{\M}{\mathrm{M }}

\newcommand{\End}{\operatorname{End}}

\newcommand{\Frob}{\operatorname{Frob}}

\newcommand{\Aut}{\operatorname{Aut}}
\newcommand{\Oo}{\mathcal O}

\newcommand{\p}{\mathfrak{p}}
\newcommand{\PP}{\mathfrak{P}}
\newcommand{\bb}{\mathfrak{b}}
\newcommand{\Ll}{\mathfrak{L}}
\newcommand{\Res}{\operatorname{Res}}
\newcommand{\Tr}{\operatorname{Tr}}

\newcommand{\Nm}{\operatorname{Nm}}

\newcommand{\Mm}{\mathfrak{M}}


\usepackage{url}


\begin{document}
\title[Ordinary primes for some varieties with extra endomorphisms]{Ordinary primes for some varieties\\ with extra endomorphisms}
\author{Francesc Fit\'e}

\subjclass[2010]{11G10, 11R45}
\keywords{Ordinary abelian varieties, $\lambda$-adic representations, endomorphism algebras.}

\address{Departament de matem\`atiques i inform\`atica and Centre de recerca matem\`atica,
Universitat de Barcelona,
Gran via de les Corts Catalanes 585, 08007 Barcelona, Catalonia}
\email{ffite@ub.edu}
\urladdr{http://www.ub.edu/nt/ffite/}

\date{\today}

\begin{abstract} Let $A$ be an abelian variety defined over a number field and of dimension $g$. When $g\leq 2$, by the recent work of Sawin, we know the exact (nonzero) value of the density of the set of primes which are ordinary for $A$. In higher dimension very little is known. We show that if $g=3$ and $A$ has multiplication by an imaginary quadratic field $E$, then there exists a nonzero density set of ordinary primes for~$A$. We reach the same conclusion if $g=4$ and the pair $(A,E)$ has signature $(2,2)$. We also obtain partial results when $g=3$ and $A$ has multiplication by a totally real cubic field. We show that our methods also apply to certain abelian varieties of Albert type IV of higher dimension. These results are derived from an extended version of the $\ell$-adic methods of Katz, Ogus, and Serre in the presence of extra endomorphisms. 
\end{abstract}

\maketitle

\section{Introduction} Throughout this note let $k$ denote a number field and $A$ be an abelian variety defined over $k$ of dimension $g\geq 1$. Let $\p$ be a prime of good reduction for $A$ and choose a rational prime $\ell$ not divisible by $\p$. We will denote by $P_\p(A,T)$ the characteristic polynomial of a Frobenius element $\Frob_\p$ acting on the rational $\ell$-adic Tate module $V_\ell(A)$ of $A$. The polynomial $P_\p(A,T)$ is monic of degree $2g$, has integer coefficients, and does not depend on the choice of $\ell$. 

We will denote by $A_\p$ the reduction of $A$ modulo $\p$. If $p$ is the rational prime below $\p$, let $v$ denote an extension to the ring of algebraic integers $\overline \Z$ of the $p$-adic valuation. Let us denote by $u_\p(A)$ the number of roots of $P_\p(A,T)$ in $\overline\Z$ which have $v$-adic valuation $0$. We say that $\p$ is \emph{ordinary} for $A$ if $u(A_\p)$ coincides with the dimension $g$, or if equivalently the coefficient of $T^g$ in $P_\p(A,T)$ is not divisible by~$p$.

When $A$ is an elliptic curve, it is well known (see \cite{Ser89}) that the set of ordinary primes has density $1/2$ or $1$, depending on whether $A$ has complex multiplication or not. Sawin \cite[Thm. 3]{Saw16} has determined the density of the set of ordinary primes when $A$ is an abelian surface: it only attains the values $1/4$, $1/2$ or $1$. Note that this density had been previously known to be positive (see \cite[p. 370--372]{Ogu82}). More in general, it is a conjecture often attributed to Serre that for any abelian variety the set of ordinary primes has a nonzero density.  

It is nonetheless worth noting that the conjecture is not known in many cases beyond dimension 2. The abundance of ordinary primes is known if $A$ has potential complex multiplication (see Remark~\ref{remark: CM}), and it has also been proven in cases where $\End(A_\Qbar)= \Z$ and the Mumford--Tate group is in some sense small (see the examples considered in \cite{Mum69}). For this we refer to the results by Noot \cite[Thm. 2.2]{Noo95} and Pink \cite[Thm. 7.1]{Pin98}. As we will see below, the conjecture also follows from the work of \cite{Suh20} for certain modular abelian varieties.

The original motivation for this note was to investigate the density of the set of ordinary primes for a nongeneric abelian threefold, that is, an abelian threefold $A$ for which $\End(A_\Qbar)\not = \Z$. If $A$ decomposes up to $\Qbar$-isogeny, then the existence of a nonzero density set of ordinary primes for $A$ follows from the additivity of the function $u_\p$ together with the results of Sawin and what is known for elliptic curves. As mentioned above, the existence of such a set when $A$ has potential complex multiplication is well known. Albert's classification (see for example \cite[\S21]{Mum74}) leaves us with two possibilities to consider: the case in which $A$ has multiplication by an imaginary quadratic field and the case in which $A$ has real multiplication by a totally real cubic field. Our approach was fully successful in the first case.

\begin{theorem}\label{theorem: main}
Let $A$ be an abelian threefold defined over a number field $k$. Suppose that $\End(A_\Qbar)\otimes \Q$ contains an imaginary quadratic field. Then there exists a set of nonzero density of primes of $k$ ordinary for $A$.
\end{theorem}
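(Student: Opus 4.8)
The plan is to isolate the essential case via Albert's classification, to read off the Newton polygon of $A_\p$ from the action of $\Frob_\p$ on an $E$-isotypic summand of the Tate module, and then to run the Chebotarev machinery of Katz--Ogus--Serre.

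First I would dispose of the easy cases. If $A$ is $\Qbar$-isogenous to a product of lower-dimensional abelian varieties, the conclusion follows from the additivity of $u_\p$, Sawin's theorem, and the elliptic curve case, exactly as in the Introduction. If $A$ is geometrically simple, then $\End(A_\Qbar)\otimes\Q$ is a division algebra containing $E$, and Albert's classification for $g=3$ leaves only two options: $\End(A_\Qbar)\otimes\Q=E$, or $\End(A_\Qbar)\otimes\Q$ is a sextic CM field (necessarily containing $E$), in which case $A$ has potential CM and we are done by Remark~\ref{remark: CM}. So assume $A$ geometrically simple with $\End(A_\Qbar)\otimes\Q=E$. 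After a finite base change — harmless for the statement, since an ordinary prime of the extension lies over an ordinary prime of $k$ with the same residue characteristic, the correspondence being finite-to-one — I may assume $\cO_E\subseteq\End(A)$ and that the $\ell$-adic monodromy group $G_\ell$ is connected (and sits inside a group of unitary similitudes attached to $E$). As the Rosati involution restricts to complex conjugation on $E$, the pair $(A,E)$ has a signature $(r,s)$ with $r+s=3$; the value $(3,0)$ is excluded, for it would force the Mumford--Tate group to be a torus and hence $\End(A_\Qbar)\otimes\Q$ to be a sextic CM field; so the signature is $(2,1)$.

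The heart of the matter is an $\ell$-adic criterion for ordinariness. Fix a rational prime $\ell$ split in $E$, so that $V_\ell(A)=V^{(1)}\oplus V^{(2)}$ with each $V^{(j)}$ a three-dimensional $\Q_\ell$-subspace stable under $G_k$, the Weil pairing identifying $V^{(2)}$ with the $\nu$-twist of the dual of $V^{(1)}$, where $\nu$ is the similitude character, $\nu(\Frob_\p)=\Nm(\p)$. Normalise $V^{(1)}$ to be the component on which $\det\Frob$ has $v$-adic valuation $2$ (the two possible valuations of $\det(\Frob\mid V^{(j)})$ being a fixed power $1$ and a fixed power $2$ of the residue characteristic, dictated by the signature $(2,1)$ together with the fact that $\det V^{(j)}$ is a power of the cyclotomic character up to finite order). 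For a prime $\p$ of good reduction with $\Nm(\p)=p$, let $\tau_\p\in\cO_E$ be the element whose image in $\Q_\ell$ under the embedding cutting out $V^{(1)}$ is $\Tr(\Frob_\p\mid V^{(1)})$; then $\Nm_{E/\Q}(\tau_\p)\in\Z_{\ge 0}$ and $\Nm_{E/\Q}(\tau_\p)\le 9p$ by the Weil bounds. I claim: if $p$ is inert in $E$ then $\p$ is not ordinary for $A$; if $p$ is split in $E$, then $\p$ is ordinary for $A$ if and only if $p\nmid\Nm_{E/\Q}(\tau_\p)$. For the inert case, were $A_\p$ ordinary the $p$-adic Tate module of the étale quotient of $A_\p[p^\infty]$ would be a free $\Z_p$-module of rank $3$ carrying a faithful action of $\cO_E\otimes\Z_p$ — the ring of integers in the unramified quadratic extension of $\Q_p$ — but a $\Z_p$-torsion-free module over that ring has even $\Z_p$-rank, a contradiction. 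For the split case, over $E$ one has $P_\p(A,T)=Q^{(1)}(T)Q^{(2)}(T)$ with $Q^{(1)},Q^{(2)}\in\cO_E[T]$ monic of degree $3$ interchanged by complex conjugation; if $\beta_1,\beta_2,\beta_3$ are the roots of $Q^{(1)}$ then each $\beta_i$ and each $p/\beta_i$ is an algebraic integer, so the (normalised, $v(p)=1$) valuation of $\beta_i$ lies in $[0,1]$, while $\sum_i v(\beta_i)=v(\det(\Frob_\p\mid V^{(1)}))=2$. Now $\p$ is ordinary precisely when every root of $P_\p(A,T)$ has valuation in $\{0,1\}$, equivalently when $(v(\beta_1),v(\beta_2),v(\beta_3))$ is a permutation of $(0,1,1)$; since the $v(\beta_i)$ lie in $[0,1]$ and sum to $2$, an elementary check shows this holds if and only if $v(\sum_i\beta_i)=0$, i.e.\ iff $\tau_\p$ is a unit at the prime of $E$ above $p$ picked out by $v$. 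Running the same argument with $Q^{(2)}$ shows ordinariness also forces $\tau_\p$ to be a unit at the other prime above $p$, whence $\p$ is ordinary if and only if $p\nmid\Nm_{E/\Q}(\tau_\p)$, proving the claim.

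Finally I would run the Katz--Ogus--Serre argument. By the claim, the ordinary primes contain all $\p$ of good reduction with $\Nm(\p)=p$, $p$ split in $E$, and $p\nmid\Nm_{E/\Q}(\tau_\p)$. Since $0\le\Nm_{E/\Q}(\tau_\p)\le 9p$, the condition $p\mid\Nm_{E/\Q}(\tau_\p)$ amounts to $\Nm_{E/\Q}(\tau_\p)=cp$ for some $c\in\{0,1,\dots,9\}$, which (as $p=\nu(\Frob_\p)$) reads $\Frob_\p\in Z_c$, where $Z_c\subseteq G_\ell$ is the zero locus of the regular function $g\mapsto\Tr(g\mid V^{(1)})\,\Tr(g\mid V^{(2)})-c\,\nu(g)$. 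Because $A$ is not of CM type, the connected group $G_\ell$ is not a torus; its derived group acts nontrivially — hence with nonconstant trace — on $V^{(1)}$, while $\nu$ is trivial on it, and the function takes the value $9\ne c$ at the identity when $c\ne 9$; so each $Z_c\subsetneq G_\ell$ is a proper closed subvariety, $Z_c(\Z_\ell)$ is Haar-null, and by the openness of the image of $G_k$ in $G_\ell(\Q_\ell)$ (Bogomolov) together with the Chebotarev density theorem the set of $\p$ with $\Frob_\p\in Z_c(\Z_\ell)$ has density zero. Summing over the ten values of $c$, and noting that the primes $\p$ with $\Nm(\p)=p$ split in $E$ have positive density (density $1$ once $E\subseteq k$), we conclude that the ordinary primes have positive density. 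I expect the main obstacle to be the $\ell$-adic criterion itself — in particular justifying $\sum_i v(\beta_i)=2$ through the Hodge-theoretic weight of $\det V^{(1)}$, and excluding ordinary reduction at primes inert in $E$ — while the reductions and the concluding Chebotarev step are routine once $G_\ell$ is known not to be a torus.
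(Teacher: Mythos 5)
Your overall strategy is sound, and the positivity of density is plausible along your lines, but both the $\ell$-adic criterion and the Chebotarev step diverge markedly from the paper's, and one intermediate justification is actually false.

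The paper works with $\wedge^2\varrho_{A,\lambda}$: after trivializing its mod-$\lambda^n$ reduction over a finite extension of $k$ and running the Katz--Ogus Chebotarev argument (Propositions \ref{proposition: Ogus} and \ref{proposition: refinement}), it produces a density-one set of split degree-one primes $\p$ for which the middle coefficient $b_{\p,\sigma}=\Tr(\wedge^2V_\sigma(A)(\Frob_\p))$ is a $v$-unit for at least one of $\sigma,\overline\sigma$. A Newton polygon with $v(b_{\p,\sigma})=0$ \emph{automatically} yields two unit roots regardless of $v(a_{\p,\sigma})$ and $v(c_{\p,\sigma})$, and bounding $v(c_{\p,\sigma})+v(c_{\p,\overline\sigma})=3$ finishes. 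The Hecke-character/signature input is never needed in the dimension-3 case. You instead work with the trace $a_{\p,\sigma}=\Tr(V_\sigma(A)(\Frob_\p))$ itself; a Newton polygon with $v(a_{\p,\sigma})=0$ only gives \emph{one} unit root, so you need to pin down $v(c_{\p,\sigma})$ a priori. Your justification for $v(\det(\Frob_\p\mid V^{(1)}))=2$ is flawed: it is simply not true that $\det V^{(j)}$ is ``a power of the cyclotomic character up to finite order'' --- it is an algebraic Hecke character with nonconstant infinity type $(2,1)$ (the content of the paper's Proposition~\ref{proposition: RibetShimura}, which is proven via Tate's Hodge--Tate decomposition and is not deployed in the paper until Theorem~\ref{theorem: main3}). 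Your argument can nonetheless be repaired without this input: the two unit-trace conditions $v_\PP(a_{\p,\sigma})=v_\PP(a_{\p,\overline\sigma})=0$ force $\min_i v(\beta_i)=0$ and $\max_i v(\beta_i)=1$, and then integrality of $v(c_{\p,\sigma})$ pins the middle slope in $\{0,1\}$, so ordinariness follows without ever determining which factor has $v(\det)=2$. (As a side remark, your asserted ``iff'' is also false in the ``only if'' direction: two unit eigenvalues of $V_{\overline\sigma}$ may cancel modulo $\PP$, so ordinariness does not force $p\nmid\Nm_{E/\Q}(\tau_\p)$; but only the ``if'' direction is used.)

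For the density step you invoke the algebraic monodromy group $G_\ell$, Bogomolov's openness, and measure-zero of proper subvarieties $Z_c$. This is a heavier toolkit than the paper's, and the case $c=9$ (where the identity lies in $Z_9$) still needs a genuine argument that $\Tr(g\mid V^{(1)})\Tr(g\mid V^{(2)})-9\nu(g)$ is not identically zero on $G_\ell$; ``nonconstant trace on the derived group'' does not by itself rule out that the \emph{product} of the two traces is constant, though a short computation (using that $V^{(2)}\simeq (V^{(1)})^\vee(1)$) closes this. The paper's route via Proposition~\ref{proposition: Ogus}, which reduces to elementary congruences and the ordinary Chebotarev theorem, bypasses all reliance on the monodromy group. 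Finally, your reductions (to $A$ geometrically simple with $\End(A_\Qbar)\otimes\Q=E$ and signature $(2,1)$, and the observation that inert primes cannot be ordinary) are correct but unnecessary for the statement: the paper's proof handles all cases uniformly, including signature $(3,0)$, because the case $v(c_{\p,\sigma})\in\{0,3\}$ is dealt with trivially before the Newton polygon step.

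Net: a genuinely different and essentially viable route, trading the paper's elementary $\wedge^2$-plus-Katz--Ogus argument for a trace-of-$V_\sigma$ argument that needs extra input on $\det V_\sigma$ (your stated reason for which is wrong, though fixable) and a monodromy-group form of Chebotarev.
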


In particular, every Picard curve defined over a number field possesses a nonzero density set of ordinary primes. In the case of real multiplication, we were only able to ensure in general the existence of a nonzero density set of ``half ordinary" primes.

\begin{theorem}\label{theorem: main2}
Let $A$ be an abelian variety defined over a number field $k$ of dimension $g\geq 1$. Suppose that $\End(A_\Qbar)\otimes \Q$ contains a Galois number field of degree $g$. Then there exists a set of nonzero density of primes $\p$ of $k$ for which $u_\p(A)$ is at least $\lceil g/2\rceil$.
\end{theorem}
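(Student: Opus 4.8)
The plan is to translate $u_\p(A)\ge\lceil g/2\rceil$ into an elementary statement about the Frobenius trace living in the Galois field $F\subseteq\End(A_\Qbar)\otimes\Q$ of degree $g$, and then to run the $\ell$-adic argument on the two‑dimensional pieces that $F$ cuts out of the Tate module. First I would enlarge $k$ to a finite extension $k'$ over which $\cO_F$ acts on (an isogenous model of) $A$; since $u_\p(A)$ depends only on $A_\p$ and the primes of $k$ that split completely in $k'$ have positive density, a positive-density set of primes of $k'$ with $u_{\p'}\ge\lceil g/2\rceil$ descends to one of primes of $k$. If $g$ is odd, $F$ is automatically totally real; if $g$ is even, $F$ is totally real, CM, or forces $A$ to split up to $\Qbar$-isogeny (in which case the statement follows by additivity of $u_\p$ together with the cases already known), and the CM / Albert-type-IV cases are handled either by a descent of the endomorphism structure reducing $g$ to $g/2$ or by the companion analysis used for Theorem \ref{theorem: main}; I concentrate on the totally real case, which is the heart of the matter. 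Then $V_\ell(A)$ is free of rank $2$ over $F\otimes\Q_\ell$, the Frobenius has an $\ell$-independent trace $t_\p\in\cO_F$ with $|\sigma(t_\p)|\le 2\sqrt{\Nm\p}$ for every $\sigma\colon F\hookrightarrow\R$, and $P_\p(A,T)=\prod_{\sigma}\bigl(T^2-\sigma(t_\p)T+\Nm\p\bigr)$.

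The crux is the following reformulation. Let $e_m(t_\p)\in\Z$ be the elementary symmetric functions of the $\Gal(F/\Q)$-conjugates of $t_\p$, so that the characteristic polynomial of multiplication by $t_\p$ on $F$ is $\chi_{t_\p}(X)=\sum_{m=0}^{g}(-1)^m e_m(t_\p)X^{g-m}$ with $e_0(t_\p)=1$. For each $\sigma$ the two roots of $T^2-\sigma(t_\p)T+\Nm\p$ have $v$-valuations summing to $v(\Nm\p)>0$ and both lying in $[0,v(\Nm\p)]$, so this quadratic contributes exactly one $v$-unit root precisely when $v(\sigma(t_\p))=0$; hence $u_\p(A)=\#\{\sigma : v(\sigma(t_\p))=0\}$, which is the length of the slope-$0$ segment of the $p$-adic Newton polygon of $\chi_{t_\p}$, and because the $e_m(t_\p)$ are integers this equals
\[
u_\p(A)=\max\{\,m\ge 0 : p\nmid e_m(t_\p)\,\}.
\]
Thus I must exhibit a positive-density set of primes $\p$ for which $p\nmid e_m(t_\p)$ for at least one $m\ge\lceil g/2\rceil$; equivalently, for which $t_\p\bmod p$ lies outside the proper closed subscheme of $\cO_F\otimes\F_p$ defined by $e_{\lceil g/2\rceil}=\dots=e_g=0$. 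Since $F/\Q$ is Galois the $e_m$ are symmetric in the conjugates, so one may replace them by traces and norms to the fixed field of an index-$\approx g/2$ subgroup of $\Gal(F/\Q)$ when one exists; this is exactly where the Galois hypothesis is used.

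For the remaining and decisive step I would invoke the extended $\ell$-adic method of Katz, Ogus and Serre announced in the introduction. The algebra $F$ forces $\rho_\ell(G_k)$ into $(\mathrm{Res}_{F/\Q}\GL_2)(\Q_\ell)$ with similitude the $\ell$-adic cyclotomic character, and its Zariski closure is as large as the endomorphism algebra and the polarization allow, so it contains a maximal torus $T$ together with the central $\mathbb{G}_m$ attached to the Weil pairing. Choosing in $T$ a cocharacter realising "half of the Hodge filtration" — weights $0$ and $1$ on $\lceil g/2\rceil$ of the two-dimensional factors $V_\sigma$ — and combining $\ell$-adic Chebotarev equidistribution of $\Frob_\p$ with the Weil bound $|\sigma(t_\p)|\le 2\sqrt{\Nm\p}$, one produces a positive-density set of $\p$ on which a suitable integral invariant built from $t_\p$ (a norm to a subfield of $F$ of degree about $g/2$, or the low-degree coefficients $e_1,\dots,e_{\lceil g/2\rceil}$ themselves) is certified, by its archimedean size together with its residue modulo $\ell$, to be a nonzero integer prime to $p$. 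On that set $u_\p(A)\ge\lceil g/2\rceil$.

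The hard part is precisely this last passage, from the auxiliary prime $\ell$ — where one has equidistribution — to the prime $p$, where ordinariness is detected: as in Katz–Ogus–Serre the only available bridge is the Weil inequality, and $|e_m(t_\p)|\ll_g(\Nm\p)^{m/2}$ improves on $\Nm\p$ only for $m$ of size $\lesssim g/2$, even after using the polarization to fold the $2g$ Frobenius slopes into a Lagrangian's worth of data and the Galois structure to descend to a subfield of that size. For $m$ close to $g$ the bound is vacuous for $\p$ of large norm, so the method yields the half-ordinary conclusion but genuinely stops short of full ordinariness. Making the archimedean bookkeeping and the choice of torus and cocharacter precise, and checking that the finitely many bad $\ell$ and the thin set of $\p$ on which the argument degenerates do not affect positivity of the density, are the remaining technical points.
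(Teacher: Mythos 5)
Your reformulation $u_\p(A)=\max\{m: p\nmid e_m(t_\p)\}$ is correct (when $E$ is totally real), and the right ingredients — the $\lambda$-adic representation $\varrho_{A,\lambda}$, the Weil bound, a Chebotarev congruence at an auxiliary $\ell$ — are all present. But the plan does not close: to conclude you would need $p\nmid e_m(t_\p)$ for some $m\geq\lceil g/2\rceil$, and the only archimedean control you name is $|e_m(t_\p)|\ll_g p^{m/2}$, which is smaller than $p$ only when $m<2$. So a ``residue mod $\ell$ plus archimedean size'' certification, which is what kills $e_1$, has no chance against $e_{\lceil g/2\rceil}$ as soon as $g\geq 4$. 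You notice the tension, but the remark about ``norms to a subfield of degree about $g/2$'' does not fix it: for any intermediate field $F^H$ one has $\Nm_{F^H/\Q}\circ\Nm_{F/F^H}=\Nm_{F/\Q}$, so one is led back to the same norm, and the per-prime information you would need is not recovered.

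The paper's argument is both simpler and more robust, and it does not certify that any individual $e_m$ is prime to $p$. It applies Proposition~\ref{proposition: Katz} directly to the $2$-dimensional $E_\lambda$-representation $\varrho_{A,\lambda}$: the Chebotarev step forces $a_\p=\Tr\varrho_{A,\lambda}(\Frob_\p)\in\cO_E$ to be nonzero (congruent to $2$ mod $\lambda^n$), and then the single Weil estimate $|\Nm_{E/\Q}(a_\p)|\leq(2\sqrt p)^{[E:\Q]}$ bounds the total $p$-part of the ideal $(a_\p)$, giving $\sum_{\PP\mid p}f_\PP v_\PP(a_\p)\leq [E:\Q]/2$. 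After enlarging $k$ so that $p$ is totally split in $E$, this says at most $\lfloor g/2\rfloor$ of the Galois conjugates $\sigma(a_\p)$ have positive $v$-valuation; each of the remaining $\geq\lceil g/2\rceil$ conjugates makes its quadratic factor $P_\p(V_\sigma(A),T)$ contribute a $v$-unit root. Equivalently, one bounds $v_p(e_g)$, not the divisibility of some middle $e_m$.

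Two more structural points. First, the case split (totally real vs.\ CM vs.\ type IV) is unnecessary; the Katz-type estimate on $a_\p$ makes no use of the signature of $E$, and the paper treats an arbitrary Galois field of degree $g$ uniformly. Your identity $P_\p(A,T)=\prod_\sigma(T^2-\sigma(t_\p)T+\Nm\p)$ holds only in the totally real case, and in the CM case your reduction to ``a descent to $g/2$'' or ``the companion analysis of Theorem~\ref{theorem: main}'' is not substantiated (Theorem~\ref{theorem: main} concerns imaginary quadratic $E$, not CM fields of degree $g$). Second, the appeal to the Zariski closure of $\rho_\ell(G_k)$ being ``as large as the endomorphism algebra and the polarization allow'' is an unjustified input of Mumford--Tate type that the proof neither has nor needs.
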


The above result is essencially contained in \cite[Thm. 4.1.1]{Suh20}. We will present a proof of it in \S\ref{section: main2} for the reader's convenience. Under an additional hypothesis of linear disjointness over $k$ between the field giving the real multiplication and the field of definition of this multiplication, we can ensure abundance of ordinary primes. 

\begin{theorem}\label{theorem: main2p}
Let $A$ be an abelian threefold defined over a number field $k$. Suppose that there exists a finite extension $K/k$ and a cubic real field $E$ such that $\End(A_K)\otimes \Q$ contains $E$ and $E$ is not contained in $K$. Then there exists a set of nonzero density of primes of $k$ ordinary for $A$.
\end{theorem}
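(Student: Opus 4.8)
The plan is to use a refinement of the $\ell$-adic argument behind Theorem~\ref{theorem: main2}, and to show that the linear disjointness hypothesis supplies exactly the extra room in the Galois group needed to pass from ``at least $\lceil g/2\rceil$ ordinary slopes'' to full ordinarity. First I would reduce to the essential case. If $A_{\Qbar}$ is not simple, then a check against Albert's classification for threefolds shows that the only non-simple $A_{\Qbar}$ whose geometric endomorphism algebra can contain a cubic field is $A_{\Qbar}\sim C^3$ for an elliptic curve $C$ (the algebras $\mathrm{M}_2(\cdot)\times(\cdot)$, $(\cdot)\times(\cdot)\times(\cdot)$, and $\End(B_{\Qbar})\otimes\Q\times\End(C_{\Qbar})\otimes\Q$ with $B$ a simple surface have no cubic subfield); there the claim follows from the additivity of $u_\p$, the case of elliptic curves (or Remark~\ref{remark: CM} if $C$ has CM), and a Chebotarev argument descending from the field of definition of the isogeny to $k$, using that ordinarity is a Frobenian condition of positive density. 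If $A_{\Qbar}$ is simple, then $\End(A_{\Qbar})\otimes\Q$ is a field, and if it contains the cubic $E$ it equals either $E$ (so $E$ is totally real) or a sextic CM field, the latter being the potential-CM case of Remark~\ref{remark: CM}. So we may assume $A_{\Qbar}$ is simple with $\End(A_{\Qbar})\otimes\Q=E$ a cubic totally real field and with $E\not\subseteq K$.

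Next I would make Frobenius explicit. For a degree-one prime $\p$ of $k$ splitting completely in $K$, of good reduction, with $\Nm\p=p$ unramified in $E$ (the complementary primes having density zero), the Frobenius commutes with $E$ and has a ``trace'' $c_\p\in\cO_E$, and writing $V_\ell(A)=\bigoplus_{\lambda\mid\ell}V_\lambda$ for the decomposition over $E\otimes\Q_\ell$ one gets
\[
P_\p(A,T)=\prod_{\sigma\colon E\hookrightarrow\Qbar}\bigl(T^2-\sigma(c_\p)\,T+p\bigr).
\]
Hence the $\sigma$-block of $P_\p$ contributes slopes $\{0,1\}$ when $\sigma(c_\p)$ is a unit at the induced place above $p$ and slopes $\{\tfrac12,\tfrac12\}$ otherwise; so $u_\p(A)$ counts the blocks in which $\sigma(c_\p)$ is a unit, and $\p$ is ordinary if and only if $c_\p$ is a unit at every prime of $E$ above $p$. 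Since $\End(A_{\Qbar})\otimes\Q=E$ exactly, the $\ell$-adic monodromy group is open in (a form of) $\Res_{E/\Q}\mathrm{GL}_2$ with the determinants tied together; in particular the $\lambda$-adic images $\rho_\lambda(G_K)$ are large, with no exceptional small image occurring, which one reads off from the theory of images of $\lambda$-adic representations attached to real-multiplication abelian varieties together with $\End(A_{\Qbar})\otimes\Q=E$.

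Finally one feeds this into the extended $\ell$-adic (Katz--Ogus--Serre) criterion available to us: ordinarity of $\p$ means that $\Frob_\p$ lands in the fully ordinary stratum of the monodromy group, and a positive proportion of Frobenii do so provided that stratum is rational and accessible by Chebotarev -- i.e.\ provided one can force, by conditions on $\Frob_\p$ in finite Galois quotients, that $\sigma(c_\p)$ be a unit above $p$ for all three embeddings $\sigma$ at once. This is where, and why, the hypothesis $E\not\subseteq K$ enters, and pinning down the right statement is the crux: over the compositum $KE$, which is a proper degree-three extension of $K$ precisely because $E\not\subseteq K$, the three $\sigma$-blocks become genuinely decoupled, so one can choose the conjugacy class of Frobenius to be ordinary in all three; pushing this back down to $k$ needs the relevant primes to split suitably in $KE/k$ (a positive-density condition) and needs $KE$ to be linearly disjoint from the splitting fields of the $\rho_\ell$'s (automatic away from the finitely many bad primes), so that the two Chebotarev conditions may be imposed independently. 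When $E\subseteq K$ the blocks stay linked -- through the rationality of the determinant and the Weil pairing -- and one can only force ordinarity in $\lceil g/2\rceil$ of them, which is Theorem~\ref{theorem: main2}. The technical heart of the argument is therefore to verify that the arithmetic monodromy group surjects onto $\Gal(KE/k)$ compatibly with a geometric monodromy over $KE$ that is large enough for the ordinary stratum to be non-empty and rational.
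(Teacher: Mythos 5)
Your proposal diverges from the paper at the crucial step, and the divergence is a genuine gap. Setting aside the reduction to the simple case (a detour the paper avoids: it works directly with any $A$ such that $E\hookrightarrow\End(A_K)\otimes\Q$), the technical heart of your argument --- that the image of $\varrho_{A,\ell}$ is open in a form of $\Res_{E/\Q}\GL_2$ and that the ``ordinary stratum'' is a rational, Chebotarev-accessible locus --- is not an argument but a restatement of the theorem. Nothing in the sketch supplies a mechanism forcing $\Frob_\p$ to land in the set where $\sigma(c_\p)$ is a $p$-adic unit for \emph{all three} $\sigma$ simultaneously. Moreover, the assertion that the three $\sigma$-blocks become ``genuinely decoupled'' over $KE$ is false: by strict compatibility of the $\lambda$-adic system, the traces $a_{\p,\sigma}=\sigma(c_\p)$ are all determined by the single element $c_\p\in\Oo_E$ and are never independent. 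You have correctly located where the hypothesis $E\not\subseteq K$ must enter, but attributed to it the opposite of its actual effect.

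What the paper does is use $E\not\subseteq K$ to \emph{couple} the blocks as tightly as possible rather than decouple them. After arranging that $kE/k$ is cyclic of degree $3$ (replace $k$ by $K$, then by the compositum with the quadratic resolvent), one restricts to the positive-density set $R$ of degree-one primes $\p$ of $k$ that are \emph{inert} in $kE$. For such $\p$ the rational prime $p$ is inert in $E$, so the unique prime $\PP$ above $p$ has $\Nm(\PP)=p^3$, and all three embeddings induce the same $v$-adic valuation on $c_\p$. Hence either $v(a_{\p,\sigma})=0$ for every $\sigma$ and $\p$ is ordinary, or $\PP\mid(a_{\p,\sigma})$, which gives $p^3\mid\Nm_{E/\Q}(a_{\p,\sigma})$; the Weil bound $|\Nm_{E/\Q}(a_{\p,\sigma})|\le(2\sqrt p)^3$ then forces $a_{\p,\sigma}=0$ for $p$ large. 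One is left with the alternative that the trace vanishes on a positive-density set, which Proposition~\ref{proposition: Serre} (Serre's Chebotarev criterion on vanishing traces) converts into potential abelianity of $\varrho_{A,\lambda}$, hence into the potential-CM case handled by Remark~\ref{remark: CM}. These two ingredients --- inert primes combined with the norm/Weil bound, and Proposition~\ref{proposition: Serre} to kill the vanishing-trace branch --- are the missing ideas in your proposal.
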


In the above theorem we restrict to dimension 3 for simplicity. One can certainly find numerical conditions on the dimension of $A$ to derive the same conclusion in other cases. In particular, the theorem holds for the modular abelian varieties $A_f$ attached by Eichler and Shimura to (non CM) classical newforms $f$ of weight $2$ and cubic coefficient field.
In dimension $g=4$, we were able to prove an analogue of Theorem~\ref{theorem: main} under the assumption of an additional hypothesis (we refer to Definition~\ref{definition: signature} for the notion of signature).

\begin{theorem}\label{theorem: main3}
Let $A$ be an abelian fourfold defined over a number field $k$. Suppose that $\End(A_\Qbar)\otimes \Q$ contains an imaginary quadratic field and that the pair $(A,E)$ has signature $(2,2)$. Then there exists a set of nonzero density of primes of $k$ ordinary for~$A$.
\end{theorem}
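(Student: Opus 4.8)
The plan is to run the extended Katz--Ogus--Serre argument on the four-dimensional piece of the $\ell$-adic representation cut out by $E$: multiplication by an imaginary quadratic field effectively halves the dimension, so that, once the signature is $(2,2)$, the resulting problem behaves like the case of an abelian surface treated by Ogus.

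I would begin with the reduction. Ordinarity of $A_\p$ depends only on $A_{\kbar}$ and on $\p$, and passing to a finite extension of $k$ sends a set of primes of positive density to a set of positive density; so after a base change I may assume that $\Oo_E$ acts on $A$ over $k$. The balanced signature makes it possible to do this while keeping $E\not\subseteq k$, so that there remain rational primes inert in $E$ admitting a prime of $k$ of residue degree one above them; it is over such primes that the argument operates. Fix one, say $\p$ with $\Nm(\p)=p$ and $p$ inert in $E$, fix a rational prime $\ell\neq p$ and an embedding $\sigma\colon E\hookrightarrow\overline{\Q}_\ell$, and let $W$ be the $\sigma$-eigenspace for $E$ inside $V_\ell(A)\otimes_{\Q_\ell}\overline{\Q}_\ell$, a $\Gal(\kbar/k)$-stable subspace of dimension $4$. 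Since the Rosati involution of a polarization restricts to complex conjugation on $E$, the Weil pairing identifies the $\bar\sigma$-eigenspace with $W^\vee(-1)$; hence, writing $\chi_W(T)=T^4-c_1T^3+c_2T^2-c_3T+c_4$ for the characteristic polynomial of $\Frob_\p$ on $W$ — which lies in $\Oo_E[T]$ (via $\sigma$), is independent of $\ell$, and satisfies $P_\p(A,T)=\chi_W(T)\cdot\overline{\chi_W(T)}$ — the multiset of $v$-adic valuations of the eigenvalues on $W^\vee(-1)$ is obtained from that on $W$ by $x\mapsto 1-x$. Here the signature $(2,2)$ enters: it forces the common endpoint of the Newton and Hodge polygons of $W$ to have height $2$, that is $v(c_4)=2$; since moreover the four eigenvalues $\gamma_i$ of $\Frob_\p$ on $W$ have $v$-adic valuations in $[0,1]$, a short argument with the Newton polygon of $\chi_W$ shows that $\p$ is ordinary for $A$ if and only if $v(c_2)=0$ — equivalently, that $\Frob_\p$ on $W$ has Newton slopes $(0,0,1,1)$, which is its Hodge polygon. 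Note also that for $p$ inert in $E$ the failure $v(c_2)\geq 1$ means $p\mid c_2$ in $\Oo_E$, hence $p^2\mid \Nm_{E/\Q}(c_2)$.

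Now I would run the method. By Chebotarev applied to the mod-$\ell$ representation one obtains a positive-density set of primes $\p$ of the above type for which $\Nm(\p)\equiv p_0\pmod{\ell}$ and $\Nm_{E/\Q}(c_2(\p))\equiv R\pmod{\ell}$, where $p_0\in(\Z/\ell)^\times$ and $R\neq 0$ are prescribed; producing such $\p$ requires exhibiting an element of the mod-$\ell$ image, with the prescribed similitude, for which $\ell\nmid\Nm_{E/\Q}(c_2)$, which is the first group-theoretic input. For a prime $\p$ in this set that fails to be ordinary one has $p^2\mid\Nm_{E/\Q}(c_2(\p))$; combined with the Weil bound $\Nm_{E/\Q}(c_2(\p))=|c_2(\p)|^2\leq 36\,\Nm(\p)^2=36p^2$ — which here plays the role of $|a_2|\leq 6p$ in Ogus's treatment of abelian surfaces — and with the congruence modulo $\ell$, this forces $\Nm_{E/\Q}(c_2(\p))=m_0\,\Nm(\p)^2$ for a single integer $m_0$ (the representative of $R\,p_0^{-2}$ in $\{0,\dots,36\}$, if it lies there; if not, no such $\p$ occurs). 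But the set of $\p$ with $\Nm_{E/\Q}(c_2(\p))=m_0\,\Nm(\p)^2$ — equivalently, on which $|c_2(\p)/\Nm(\p)|^2$ takes the fixed value $m_0$ — corresponds to $\Frob_\p$ lying in a proper Zariski-closed subset of the $\ell$-adic monodromy group and therefore has density zero (this is the second group-theoretic input; it would also follow from the Sato--Tate conjecture for $A$). Hence the positive-density set just constructed consists, outside a density-zero subset, of ordinary primes for $A$.

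The step I expect to be the main obstacle is verifying these two group-theoretic inputs — that the image of the Galois representation on $W$ is large enough to contain the required elements modulo $\ell$ and to make the exceptional locus proper. After a further finite base change one reduces to the case $\End(A_{\kbar})\otimes\Q=E$, so that this becomes a statement about the connected $\ell$-adic monodromy group attached to $A$ and its action on $W$; for an abelian fourfold of signature $(2,2)$ with multiplication by $\Oo_E$ this group is a group of general unitary similitudes of a rank-four hermitian $E$-space of signature $(2,2)$, and the needed facts then follow from the known description of the $\ell$-adic monodromy of such abelian varieties. It is worth emphasizing that the balanced signature is used at essentially every step: it makes $v(c_4)=2$ (hence the clean criterion $v(c_2)=0$), it guarantees that ordinary abelian varieties of this type exist in residue characteristic $p$ for $p$ inert in $E$ (so that there are ordinary primes above such $p$ at all), and it is what permits $E\not\subseteq k$, which puts the divisibility $p^2\mid\Nm_{E/\Q}(c_2)$ at our disposal. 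For the signatures $(3,1)$ and $(4,0)$ each of these ingredients fails, and this is why the theorem is stated only for signature $(2,2)$.
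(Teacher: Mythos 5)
Your proposal takes a genuinely different route from the paper, and unfortunately the route has gaps that your own account flags but does not close. The paper works over primes \emph{split} in $E$ (after enlarging $k$ to contain $E$), applies Propositions~\ref{proposition: Ogus} and \ref{proposition: refinement} directly to $\wedge^2\varrho_{A,\lambda}$ to produce the density-one set $R$ with $p\nmid b_{\p,\sigma}$, and then uses Proposition~\ref{proposition: RibetShimura} (the infinity type of the Hecke character $\det\varrho_{A,\lambda}$ matches the complex-conjugate signature) to force $v(\delta_{\p,\sigma})=v(\delta_{\p,\overline\sigma})=2$; here the signature hypothesis is exactly what balances the two determinant valuations, and the whole argument runs on the cheap reduction-mod-$\lambda^n$ trick of Katz--Ogus with no input about the size of the Galois image. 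You instead work over primes \emph{inert} in $E$ and want to run a Chebotarev argument on the characteristic polynomial of $\Frob_\p$ on the $\sigma$-eigenspace $W$, in the style of Ogus's surface case but with the monodromy replaced by a unitary similitude group.

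Two points are genuine gaps, and a third is a misattribution. First, the reduction ``after a further finite base change one reduces to the case $\End(A_{\kbar})\otimes\Q=E$'' is not available: a base change cannot shrink $\End(A_{\kbar})$, and the theorem allows $\End(A_\Qbar)\otimes\Q$ to be strictly larger than $E$ (indeed Remark~\ref{remark: quatabfourfold} and Corollary~\ref{corollary: main 5} concern exactly such cases), in which event the $\ell$-adic monodromy is \emph{smaller} than $\mathrm{GU}(2,2)$ and your two group-theoretic inputs would have to be reproved. Second, those two inputs --- that the mod-$\ell$ image contains an element with prescribed similitude and $\ell\nmid\Nm_{E/\Q}(c_2)$, and that the locus $\Nm_{E/\Q}(c_2)=m_0\Nm(\p)^2$ has density zero --- are nontrivial large-image/equidistribution statements (essentially a Mumford--Tate type result plus Serre's Chebotarev machinery applied to a conjugation-invariant proper closed subset, where one must still rule out that the subset contains whole components of $G_\lambda$); you acknowledge this as ``the main obstacle'' but do not resolve it, whereas the paper's argument sidesteps the issue entirely. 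Third, the role you ascribe to the signature is not quite right: for $p$ inert in $E$ the unique prime of $E$ above $p$ is fixed by complex conjugation, so $v(c_4)=v(\overline c_4)$ and the relation $c_4\overline c_4=p^4$ forces $v(c_4)=2$ \emph{for any signature}; the $(2,2)$ hypothesis must therefore be doing its work elsewhere (plausibly in ensuring your large-image inputs are not vacuous), which is precisely the part left unverified. The paper's use of the determinantal Hecke character and its infinity type pinpoints where the signature enters and keeps the argument at the level of Katz--Ogus, which is the cleaner and in fact essential idea here.
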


As any quaternion algebra over $\Q$ contains an imaginary quadratic field, the previous theorem applies to abelian fourfolds $A$ such that $\End(A_\Qbar)\otimes \Q$ is isomorphic to a quaternion algebra over $\Q$ (such varieties necessarily have signature $(2,2)$; see Remark \ref{remark: quatabfourfold}). The paper concludes with an incursion to higher dimension, where we study abelian varieties which in some sense exhibit a behavior typical of lower dimension.

\begin{corollary}\label{corollary: main 5}
Let $A$ be an abelian variety defined over a number field $k$ of dimension $g\geq 1$ such that $\End(A_\Qbar)\otimes \Q$ contains a simple subalgebra $D$ with center $K$ and Schur index $t$. Suppose that $K$ is an imaginary quadratic field, and that either:
\begin{enumerate}[i)]
\item $t=g/2$ and $g$ divides $4$; or
\item $t=g/3$ and $g$ divides $9$; or
\item $t=g/4$, $g$ divides 16, and $(A,K)$ has signature $(g/2,g/2)$.
\end{enumerate}
Then there exists a set of nonzero density of primes of $k$ ordinary for~$A$.
\end{corollary}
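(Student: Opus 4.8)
The plan is to turn the numerical hypotheses into the statement that, although $g$ may be large, $A$ carries the same kind of $\ell$-adic datum as an abelian variety of dimension $n:=g/t\le 4$ with multiplication by $K$ — with balanced signature when $n=4$ — and then to run the $\ell$-adic construction of ordinary Frobenii already carried out in Theorems~\ref{theorem: main} and~\ref{theorem: main3} (and, for $n=2$, in \cite{Saw16}).

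I would begin with the bookkeeping and the reductions. By the additivity of $u_\p$ along a $\Qbar$-isogeny decomposition we may assume $A$ is $\Qbar$-simple; then $\End(A_\Qbar)\otimes\Q$ is a division algebra, so $D$, having no zero divisors, is itself a division algebra, say with $[D:K]=t^2$. Consequently $H_1(A_\Qbar,\Q)$ is a free $D$-module, whence $2t^2\mid 2g$; combined with the prescribed value of $t$ this gives $g\in\{2,4\}$, $g\in\{3,9\}$, $g\in\{4,8,16\}$ in cases i), ii), iii) respectively, so that in every case $n=g/t\in\{2,3,4\}$. Likewise $D$ acts on $\mathrm{Lie}(A)$, so $t$ divides both entries of the signature of $(A,K)$; the \emph{effective} signature $(a,n-a):=(r/t,s/t)$ is thus well defined, and the hypothesis in iii) that the signature of $(A,K)$ equals $(g/2,g/2)$ says exactly that the effective signature is the balanced one $(2,2)$. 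Finally, by a standard unramified descent — ordinarity of $\p$ for $A$ is equivalent to ordinarity of any prime of a finite extension $k'/k$ above $\p$ for $A_{k'}$, and the sets of primes produced below are unions of Frobenius conjugacy classes, so they descend to $k$ with positive density — we may assume $D\subseteq\End(A_k)\otimes\Q$.

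The core step is the $\ell$-adic analysis. Since $\Frob_\p$ commutes with $D$, the Zariski closure $G_\ell$ of the image of the $\ell$-adic Galois representation on $V_\ell(A)$ lies in $C_{\Q_\ell}$, where $C$ is the $\Q$-algebraic group of symplectic similitudes of $H_1(A_\Qbar,\Q)$ commuting with $D$. Because $K$ is imaginary quadratic, $[D:K]=t^2$, and $H_1(A_\Qbar,\Q)$ is free of rank $g/t^2$ over $D$, the group $C$ is an inner form of the unitary similitude group $\mathrm{GU}_n$ attached to $K/\Q$; over $\Qbar$ it becomes, up to central isogeny, $\GL_n\times\mathbb{G}_m$, and the Hodge cocharacter corresponds there — modulo the central factor — to the cocharacter of $\GL_n$ of type $(1^a,0^{n-a})$ prescribed by the effective signature. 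For $n\le 3$ and any effective signature (the degenerate cases $a\in\{0,n\}$ forcing $A$ to have potential complex multiplication, to which Remark~\ref{remark: CM} applies), and for $n=4$ with $a=2$, this is exactly the reductive datum analysed in the proofs of Theorems~\ref{theorem: main} and~\ref{theorem: main3}, and their construction of a positive-density set of ordinary Frobenii then applies verbatim: it depends only on the based root datum of $G_\ell^{\mathrm{ad}}$ — a form of $\mathrm{PGL}_n$ with $n\le 4$ — together with that cocharacter, and not on $g$.

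The one ingredient that is not already contained in the lower-dimensional cases — and I expect it to be the main obstacle — is the bigness of $G_\ell$: the construction requires that, for infinitely many $\ell$, $G_\ell$ be as large as $C$ permits, that is, that $G_\ell$ contain the derived group of the Mumford--Tate group of $A$ for all $\ell$. For $g\le 4$ this is part of Theorems~\ref{theorem: main} and~\ref{theorem: main3}; for $g\in\{8,9,16\}$ one must invoke the known results on the $\ell$-adic monodromy of simple abelian varieties of Albert type IV whose endomorphism algebra is a division algebra of the prescribed shape, and locating and applying the right such statement is where the remaining work lies. Granting it, the corollary follows by running the relevant argument of \cite{Saw16} or of Theorems~\ref{theorem: main} and~\ref{theorem: main3} in the effective dimension $n=g/t$.
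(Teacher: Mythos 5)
Your reduction to the \emph{effective} dimension $n=g/t$ is exactly the right idea, but the mechanism you propose for exploiting it does not close, and — more importantly — it is not needed. You route the reduction through the Zariski closure $G_\ell$ of the image of $\varrho_{A,\ell}$, observe that it sits inside a unitary similitude group $\mathrm{GU}_n$, and then claim that the proofs of Theorems~\ref{theorem: main} and~\ref{theorem: main3} ``apply verbatim'' because they ``depend only on the based root datum of $G_\ell^{\mathrm{ad}}$ together with the Hodge cocharacter.'' That characterization of those proofs is not accurate: they are explicit Frobenius-trace arguments built from Propositions~\ref{proposition: Katz}, \ref{proposition: Ogus}, \ref{proposition: refinement} applied to the concrete $\lambda$-adic representations $\varrho_{A,\lambda}$ and their exterior squares, followed by Newton-polygon bookkeeping; they say nothing about abstract root data. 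And, as you yourself flag in your last paragraph, your route then requires a ``bigness'' result for the $\ell$-adic monodromy of $A$ in the ambient dimensions $g\in\{8,9,16\}$, which you do not supply. That is a genuine gap in the proposal.

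The paper sidesteps all of this with the tensor decomposition \eqref{equation: tensordec}. Choosing a maximal subfield $E$ of $D$ (so $[E:K]=t$, $[E:\Q]=2t$) and a prime $\ell$ totally split in $E$, one gets that $D\otimes_K E_\lambda$ is a full matrix algebra, and hence that, for $\tau$ the restriction of $\sigma$ to $K$,
\[
V_\tau(A):=V_\ell(A)\otimes_{K\otimes\Q_\ell,\tau}\Q_\ell\ \simeq\ V_\sigma(A)^{\oplus t}
\]
as $G_k$-modules, with $V_\sigma(A)$ of dimension $n=g/t$. This gives the crucial identity
\[
P_\p(A,T)=\big(P_\p(V_\sigma(A),T)\cdot P_\p(V_{\overline\sigma}(A),T)\big)^{t},
\]
so the Newton polygon of $P_\p(A,T)$ is the $t$-fold dilation of that of the degree-$2n$ polynomial $h_\p=P_\p(V_\sigma(A),T)P_\p(V_{\overline\sigma}(A),T)\in\ZZ[T]$, and $\p$ is ordinary for $A$ if and only if $h_\p$ has exactly $n$ unit roots. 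No control of the size of $G_\ell$ is needed: the argument of Proposition~\ref{proposition: Ogus} (for $n=2$), of Theorem~\ref{theorem: main} (for $n=3$), and of Theorem~\ref{theorem: main3} together with Proposition~\ref{proposition: RibetShimura} (for $n=4$, using the hypothesis that the signature of $(A,K)$ is $(g/2,g/2)$, which by \eqref{equation: tensordec} forces $\det(\varrho_{A,\lambda})$ to have infinity type $(2,2)$) is then run directly on $V_\sigma(A)$. Two smaller remarks: the reduction to $A$ being $\Qbar$-simple is an unnecessary detour (the paper only extends $k$ to make $D$ defined over $k$, and the divisibility $t^2\mid g$ comes from the $D$-module structure of $H^1(A_\C,\Q)$ regardless of whether $D$ is a division algebra); and your statement that ordinarity is insensitive to finite base change is used implicitly in the paper too, but it should be phrased at the level of the characteristic polynomial $P_\p(A,T)$ rather than as an appeal to ``Frobenius conjugacy classes.''
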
 

The corollary applies for example to some abelian varieties of Albert type IV. Note however that by \cite[Thm. 5]{Shi63}, when $t=g/2$ and $g$ divides $4$ then $D$ is strictly contained in $\End(A_\Qbar)\otimes \Q$.

Some sort of $p$-ordinary hypothesis is a common assumption in results concerning the construction of $p$-adic $L$-functions or $p$-adic families (notably in Hida theory). Related to this, Wiles \cite[pp. 571-572]{Wil88} gave an argument ensuring the existence of a nonzero density set of primes~$\p$ at which a Hilbert newform $f$ is ordinary at \emph{some} prime of its coefficient field above $\Nm(\p)$, as long as $f$ is defined over a field of degree at most~2 (but note that for $A_f$ be ordinary it does not suffice to ensure $f$ be ordinary only at \emph{some} prime above $\Nm(\p)$). In any case, $p$-adic constructions are local in nature, and thus the \emph{abundance} of ordinary primes is generally not an essential part of the problem.
There are, however, some exceptions: results about Galois representations where the abundance of ordinary primes does appear as a technical assumption. As a matter of example, we mention the automorphic lifting theorem \cite[Thm. 8.5.2]{BCGP18}, leading to the meromorphicity of the Hasse-Weil $L$-function of a generic abelian surface defined over a totally real field.

In the remainder of this note we will be concerned with the proof of the four theorems and the corollary stated above. We will see that the methods of Katz, Ogus, and Serre (successfully employed in \cite{Ogu82} to derive the existence of a nonzero density set of ordinary primes for an arbitrary abelian surface) can be applied to the situations considered thanks to the presence in $A$ of \emph{extra} multiplications by a number field $E$. The key point is to replace the $\ell$-adic representations $\varrho_{A,\ell}$ employed in the original argument by the $\lambda$-adic representations $\varrho_{A,\lambda}$, studied by Serre, Ribet, and Zarhin, among others. In order to prove Theorem \ref{theorem: main3}, we use a theorem of Tate to describe the infinity type of the Hecke characters $\det(\varrho_{A,\lambda})$ in terms of the action of $E$ on the tangent space of~$A$, which may be of independent interest (see Proposition \ref{proposition: RibetShimura}).

\subsection*{Notation and terminology.} As we have been doing so far, we will often abuse the terminology by simply calling ``primes of $F$'' the nonzero prime ideals of the ring of integers of a number field $F$. If $A$ is an abelian variety defined over a field~$F$, then by $\End(A)$ we mean the ring of endomorphisms of $A$ defined over~$F$. If $F'/F$ is a field extension, we denote by $A_{F'}$ the base change of $A$ from $F$ to $F'$. Given a number field $E$ and $\PP$ a prime of $E$, we will denote by $v_\PP$ the associated $\PP$-adic valuation. Once the prime $\PP$ has been chosen, we will usually denote by $v$ the extension of $v_\PP$ to $\overline\Z$ obtained by choosing a prime ideal of $\overline\Z$ lying above $\PP$.

\section{Galois representation theoretic input} Let $E$ be a number field and denote by $E_\lambda$ its completion at a prime $\lambda$ of its ring of integers $\Oo_E$. Denote by $\ell$ the rational prime below $\lambda$. Let $V$ be a vector space over $E_\lambda$ of dimension $d$. Denote by $G_k$ the absolute Galois group of $k$ and let $\varrho: G_k\rightarrow \Aut(V)$ be a continuous representation unramified outside a finite set $S$ of primes of $k$. Let $|\cdot|$ denote the complex absolute value and for a prime $\p$ of $k$ let $\Nm(\p)$ denote its absolute norm. Given a prime power $q$, we say that an algebraic number $\alpha$ is a $q$-Weil number if $|\iota(\alpha)|=\sqrt q$ for every embedding $\iota\colon \Q(\alpha)\rightarrow \C$. We will say that $\varrho$ is:
\begin{enumerate}[i)] 
\item \emph{integral} (or \emph{$\Oo_E$-integral} if we want to be more precise) if for every $\p$ outside $S$ the characteristic polynomial of $\varrho(\Frob_\p)$ lies in $\Oo_E[T]\subseteq E_\lambda[T]$; 
\item \emph{of weight $w$}, for an integer $w$,  if it is integral and, for every $\p$ outside $S$, every root $\alpha$ of the characteristic polynomial of $\varrho(\Frob_\p)$ is a $\Nm(\p)^w$-Weil number. 
\end{enumerate}

Below we will use the following notations without further mention. If $k'/k$ is a finite extension of $k$, we will denote by $S'$ the set of primes of $k'$ lying above~$S$. If $\p$ is a prime of $k$ outside $S$, we denote by $a_\p$ the trace $\Tr(\varrho(\Frob_\p))$. If $\PP$ is a prime of $E$, we denote by $f_\PP$ its absolute residue degree. 

We next recall $\ell$-adic arguments due to Katz, Ogus, and Serre. Propositions~\ref{proposition: Katz} and~\ref{proposition: Ogus} correspond to \cite[Prop. 2.7]{Ogu82}. Note that \cite[Prop. 2.7]{Ogu82} only considers the case $E=\Q$. Since the general number field case has some nontrivial additional intricacies, we will include the proofs of both propositions for the reader's convenience.

\begin{proposition}[After Katz]\label{proposition: Katz}
Let $\varrho: G_k\rightarrow \Aut(E_\lambda^d)$ be a continuous representation, unramified outside a finite set $S$ of primes of $k$, $\Oo_E$-integral, and of weight~1. Then there exists a finite extension $k'/k$, and a set $R$ of primes of $k'$ disjoint from~$S'$ such that:
\begin{enumerate}[i)]
\item Every prime $\p$ in $R$ has absolute residue degree 1. 
\item For every prime $\p$ in $R$, we have that 
\begin{equation}\label{equation: strong}
\sum_{\PP\mid \Nm(\p)} f_{\PP}v_{\PP}(a_\p) \leq \frac{[E:\Q]}{2}\,,
\end{equation}
where the sum runs over the primes $\PP$ of $E$ lying over $\Nm(\p)$ and dividing $a_\p$.
\item $R$ has density 1.
\end{enumerate}
\end{proposition}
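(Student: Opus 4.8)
The strategy is to run Katz's argument for $E=\Q$ (as in \cite[Prop.~2.7]{Ogu82}) while keeping track of all archimedean places of $E$; the one genuinely new ingredient is an estimate on the sizes of the conjugates of the trace $a_\p$. We may assume $d\ge 1$, the case $d=0$ being trivial. First I would reduce to the case of connected monodromy. Let $\mathcal G\subseteq\GL_{d,E_\lambda}$ be the Zariski closure of $\varrho(G_k)$ and $\mathcal G^\circ$ its identity component. Then $\varrho(G_k)\cap\mathcal G^\circ(E_\lambda)$ is a closed subgroup of finite index in $\varrho(G_k)$, hence open, so it equals $\varrho(G_{k'})$ for a finite extension $k'/k$, and its Zariski closure is $\mathcal G^\circ$ (a closed finite-index subgroup of $\mathcal G$ contains the identity component). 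All the hypotheses on $\varrho$ are inherited by $\varrho|_{G_{k'}}$: if $\p'\mid\p$ with $\p\notin S$ and residue degree $f$, then $\varrho(\Frob_{\p'})=\varrho(\Frob_\p)^f$ has eigenvalues the $f$-th powers of those of $\varrho(\Frob_\p)$, hence $\Nm(\p')$-Weil numbers of weight $1$, and its characteristic polynomial still lies in $\Oo_E[T]$. Replacing $k$ by $k'$ and $S$ by $S'$, I may thus assume that $H:=\overline{\varrho(G_k)}^{\mathrm{Zar}}$ is connected.

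For the core estimate, fix a prime $\p\notin S$ of absolute residue degree $1$, so $\Nm(\p)=p$ is prime, and let $\alpha_1,\dots,\alpha_d$ be the roots of the characteristic polynomial of $\varrho(\Frob_\p)$; by integrality $a_\p=\sum_i\alpha_i$ lies in $\Oo_E$. For any embedding $\sigma\colon E\hookrightarrow\C$, extended to $\overline{\Q}$, each $\sigma(\alpha_i)$ is the image of the $p$-Weil number $\alpha_i$ under an embedding of $\Q(\alpha_i)$ into $\C$, so $|\sigma(\alpha_i)|=\sqrt p$ and hence $|\sigma(a_\p)|\le d\sqrt p$. Multiplying over the $[E:\Q]$ embeddings gives $|\Nm_{E/\Q}(a_\p)|\le(d\sqrt p)^{[E:\Q]}$. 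If moreover $a_\p\ne 0$, then $\Nm_{E/\Q}(a_\p)$ is a nonzero rational integer, so, writing $v_p$ for the $p$-adic valuation and using the identity $v_p\circ\Nm_{E/\Q}=\sum_{\PP\mid p}f_\PP v_\PP$,
\[
\sum_{\PP\mid p}f_\PP\,v_\PP(a_\p)=v_p\big(\Nm_{E/\Q}(a_\p)\big)\le[E:\Q]\Big(\tfrac{\log d}{\log p}+\tfrac12\Big).
\]
The left-hand side is a non-negative integer and equals the sum in \eqref{equation: strong} (the terms with $\PP\nmid a_\p$ vanish), so it is $\le[E:\Q]/2$ as soon as $p$ exceeds a constant $C=C(d,[E:\Q])$ depending only on $d$ and $[E:\Q]$ (one checks $C=d^{2[E:\Q]}$ works, with no condition needed when $d=1$).

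Finally I would take $R$ to be the set of primes $\p$ of $k$ with $\p\notin S$, of absolute residue degree $1$, with $\Nm(\p)>C$ and $a_\p\ne 0$. Properties (i) and (ii) then hold by construction and by the previous paragraph. For (iii), the primes of absolute residue degree $\ge 2$ form a set of density $0$, and $S\cup\{\p:\Nm(\p)\le C\}$ is finite, so it suffices to show that $\{\p\notin S:a_\p=0\}$ has density $0$. This is where connectedness enters: $\Tr$ is a regular function on $H$ with $\Tr(1)=d\ne 0$, hence not identically zero; since $\varrho(G_k)$ is Zariski-dense in the connected (hence irreducible) group $H$, the conjugation-invariant closed set $Z:=\{g\in\varrho(G_k):\Tr(g)=0\}$ is contained in a proper Zariski-closed subset of $H$, and therefore has Haar measure $0$ in $\varrho(G_k)$ (which is an open subgroup of $H(E_\lambda)$; when $\dim H=0$ one has $H=\{1\}$ and $Z=\emptyset$). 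By the Chebotarev density theorem — equidistribution of the Frobenius classes $\varrho(\Frob_\p)$ with respect to Haar measure — the set $\{\p\notin S:a_\p=0\}=\{\p:\varrho(\Frob_\p)\in Z\}$ has density $\mu(Z)=0$, so $R$ has density $1$.

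The point that needs care is the locus $\{a_\p=0\}$: over $k$ itself it may well have positive density — the supersingular primes of a CM elliptic curve are the basic example — which is exactly why the passage to the finite extension $k'$ cannot be avoided, and why one must handle uniformly both the case where $\varrho$ has finite image (where $k'$ can be chosen to trivialize $\varrho$) and the case of infinite image (where one invokes that a proper subvariety of the connected group $H$ has Haar measure zero). Everything else is a routine transcription of the $E=\Q$ argument, the one new element being the archimedean bound $|\Nm_{E/\Q}(a_\p)|\le(d\sqrt p)^{[E:\Q]}$.
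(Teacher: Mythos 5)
Your core estimate matches the paper's: for a degree-one prime $\p$ outside $S'$ with $a_\p\neq 0$ and $p:=\Nm(\p)>d^{2[E:\Q]}$, the archimedean bound $|\Nm_{E/\Q}(a_\p)|\leq(d\sqrt p)^{[E:\Q]}$ forces $\sum_\PP f_\PP v_\PP(a_\p)\leq[E:\Q]/2$, which is exactly how the paper obtains (ii). Where you diverge is in guaranteeing $a_\p\neq 0$ on a density-one set. You pass to the $k'$ that makes the Zariski closure $H$ of the image connected, and then argue that $\{a_\p=0\}$ has density zero via an equidistribution statement. The paper instead picks $n$ with $\lambda^n\nmid d$, passes to the $k'$ trivializing the finite residual representation $\overline\varrho\pmod{\lambda^n}$, and reads off $a_\p\equiv d\not\equiv 0\pmod{\lambda^n}$ for \emph{every} prime of $k'$ outside $S'$. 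This congruence trick handles finite and infinite image uniformly (the very dichotomy you flag at the end), requires no Zariski closure, Haar measure, or equidistribution, and gives density $1$ immediately since only finitely many primes are excluded.

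Your route also has a genuine gap as written. You assert that $\varrho(G_{k'})$ is an \emph{open} subgroup of $H(E_\lambda)$ and deduce from this that the proper Zariski-closed locus $\{\Tr=0\}$ has Haar measure zero in $\varrho(G_{k'})$. But Zariski density of a compact subgroup in $H$ does not imply openness in $H(E_\lambda)$: a compact subgroup can be Zariski-dense while having strictly smaller $\ell$-adic dimension than $H$ (for instance the graph $\{(x,\exp(\ell x)):x\in\Z_\ell\}$ is Zariski-dense in $\mathbb G_a\times\mathbb G_m$ yet one-dimensional), and nothing in the hypotheses of the proposition rules this out for $\varrho(G_{k'})$. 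The measure-zero conclusion you want is nevertheless true --- the trace, restricted to the compact $\ell$-adic analytic group $\varrho(G_{k'})$, is analytic and not identically zero since $\Tr(1)=d$, so its zero set has Haar measure zero --- but that is a different argument from the one you gave, and the passage from ``Haar measure zero'' to ``density zero'' for a general closed conjugacy-invariant set itself needs a limiting argument over finite quotients rather than a bare appeal to Chebotarev. The cleanest repair within your framework is simply to cite \cite[Prop.~13]{Ser81}, as the paper does in the proof of Proposition~\ref{proposition: Serre}: that result gives the density of $\{a_\p=0\}$ as the proportion of components of $H$ on which $\Tr\equiv 0$, which is zero once $H$ is connected and $\Tr(1)\neq 0$. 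But the paper's residual-trivialization device is both shorter and sidesteps all of these issues.
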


\begin{proof}
Without loss of generality, we may assume that $\varrho$ is of the form $\varrho\colon G_k\rightarrow \GL_d(\Oo_{E_\lambda})$, where $\Oo_{E_\lambda}$ denotes the ring of integers of $E_\lambda$. Choose an integer $n\geq 1$ such that $\lambda^n$ does not divide $d$. We will consider
\begin{equation}\label{equation: reduction}
\overline \varrho\colon G_k\rightarrow \GL_d(\Oo_{E_\lambda}/\lambda^n\Oo_{E_\lambda})
\end{equation}
the reduction of $\varrho$ modulo $\lambda^n$. Choose $k'/k$ large enough so that $\overline\varrho|_{G_{k'}}$ is trivial. Let $R$ be the set of primes $\p$ of $k'$ of absolute residue degree 1, not in $S'$, and such that $\Nm(\p)>d^{2[E:\Q]}$. Let $\p$ be a prime of $R$ and denote by $p$ its absolute norm. We claim that $ii)$ is satisfied.
First note that since $\overline \varrho|_{G_k'}$ is trivial, we have
$$
a_\p\equiv d \pmod{\lambda^n}\,,
$$
which implies in particular that $a_\p$ is nonzero. We can thus write
$$
(a_\p)=\prod_{\PP|p}\PP^{v_{\PP}(a_\p)} \bb_\p
$$ 
for some nonzero ideal $\bb_\p$ of $\Oo_E$ coprime to $p$. To shorten the notation let us write $v=\sum_{\PP\mid p} f_{\PP}v_{\PP}(a_\p)$. Suppose that $v>[E:\Q]/2$. Using that $\varrho$ is of weight 1, we get
$$
\Nm(\bb_\p)= \frac{|\Nm_{E/\Q}(a_\p)|}{\prod_{\PP\mid p}\Nm(\PP)^{v_\PP(a_\p)}}\leq \left( dp^{\frac{1}{2}-\frac{v}{[E:\Q]}}\right) ^{[E:\Q]}<1\,.
$$
For the last inequality we have used that the rational number $1/2-v/[E:\Q]$ is at most $-1/(2[E:\Q])$, while we have that $p>d^{2[E:\Q]}$. But this implies that $\bb_\p$ is zero, which is a contradiction. 
\end{proof}

\begin{remark}\label{remark: general}
Note that for every $\p\in R$ such that $\Nm(\p)=p$ is unramified in $E$, we have that $a_\p\in \Oo_E$ is not divisible by $p$. Indeed, if this were the case, then $\sum_{\PP\mid p}f_{\PP} v_\PP(a_\p)\geq [E:\Q]$. Proposition \ref{proposition: Katz} thus generalizes \cite[(2.7.1)]{Ogu82}. While for $E=\Q$ the condition ``$p$~does not divide $a_p$" and \eqref{equation: strong} are equivalent, in general the latter is stronger than the statement ``$p$ does not divide $a_\p$".
\end{remark}

We will denote by $\chi_\ell\colon G_k\rightarrow E_\lambda^\times$ the $\ell$-adic cyclotomic character, giving the action of $G_k$ on the $\ell$-power roots of unity in $\Qbar$. Tate twists are normalized so that $E_\lambda(1)$ is equipped with the action of $G_k$ by $\chi_\ell$.

\begin{proposition}[After Ogus]\label{proposition: Ogus}
Let $\varrho: G_k\rightarrow \Aut(E_\lambda^d)$ be a continuous semisimple representation, unramified outside a finite set $S$ of primes of $k$, $\Oo_E$-integral, and of weight 2. Then there exists a finite extension $k'/k$ such that either 
\begin{equation}\label{equation: alternative}
\varrho|_{G_{k'}}\simeq \bigoplus_{i=1}^d E_\lambda(1)
\end{equation}
or there exists a set $R$ of primes of $k'$ disjoint from $S'$ such that:
\begin{enumerate}[i)]
\item Every prime $\p$ in $R$ has absolute residue degree 1. 
\item For every prime $\p$ in $R$, we have that $a_\p\in \Oo_E$ is not divisible by $\Nm(\p)$.
\item $R$ has a positive lower density.
\end{enumerate}
\end{proposition}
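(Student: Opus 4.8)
The plan is to follow the $\ell$-adic argument of Ogus, the one essential new point (compared with the proof of Proposition~\ref{proposition: Katz}) being that one must reduce the Tate-twisted representation $\sigma:=\varrho\otimes\chi_\ell^{-1}$ rather than $\varrho$ itself. After conjugating we may assume $\varrho\colon G_k\to\GL_d(\Oo_{E_\lambda})$, so that $\sigma\colon G_k\to\GL_d(\Oo_{E_\lambda})$ is continuous, semisimple and of weight $0$. Fix an integer $n\ge 1$, to be taken large below, and let $k'/k$ be the fixed field of the kernel of the reduction $\overline\sigma\colon G_k\to\GL_d(\Oo_{E_\lambda}/\lambda^n\Oo_{E_\lambda})$. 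Then $\sigma(\Frob_\p)\equiv I_d\pmod{\lambda^n}$ for every prime $\p$ of $k'$ outside $S'$, hence $\Tr\sigma(\Frob_\p)\equiv d\pmod{\lambda^n}$.

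Next I would use the weight hypothesis to constrain the bad primes, namely those $\p$ of absolute residue degree $1$ outside $S'$ for which $\Nm(\p)$ divides $a_\p$ in $\Oo_E$. Let $\p$ be such a prime with $\Nm(\p)=p\ne\ell$; then $\chi_\ell(\Frob_\p)=\Nm(\p)=p$ is a unit in $\Oo_{E_\lambda}$, so $\Tr\sigma(\Frob_\p)=a_\p/p$, and since $p$ divides $a_\p$ in $\Oo_E$ we have $a_\p/p\in\Oo_E$. As $\varrho$ is of weight $2$, $|\iota(a_\p/p)|\le d$ for every embedding $\iota\colon E\to\C$, so $a_\p/p$ lies in the finite set $B$ of $b\in\Oo_E$ with $|\iota(b)|\le d$ for all $\iota$. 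Any two distinct elements of $B$ differ by a nonzero element of $\Oo_E$ whose $E/\Q$-norm is bounded in terms of $d$ and $[E:\Q]$, so for $n$ large enough $B$ injects into $\Oo_E/\lambda^n\Oo_E$; as $d\in B$, the congruence $a_\p/p\equiv d\pmod{\lambda^n}$ then forces $a_\p/p=d$, that is, $a_\p=d\,\Nm(\p)$. In particular every bad prime with $\Nm(\p)\ne\ell$ has $\Frob_\p$ in the closed, conjugation-invariant set $C:=\{g\in G_{k'}:\Tr\varrho(g)=d\,\chi_\ell(g)\}$.

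Finally I would dichotomize on the Haar measure $\mu(C)$. If $\mu(C)=1$, then $C=G_{k'}$ (a closed set of full measure in a compact group), so $\Tr\varrho$ agrees on $G_{k'}$ with the trace of $\bigoplus_{i=1}^d E_\lambda(1)$; both representations being semisimple, this gives $\varrho|_{G_{k'}}\simeq\bigoplus_{i=1}^d E_\lambda(1)$, the first alternative. If $\mu(C)<1$, then by Chebotarev equidistribution the bad primes have upper density at most $\mu(C)$ (the finitely many lying over $\ell$ contributing nothing); since the absolute-residue-degree-$1$ primes of $k'$ outside $S'$ have density $1$, the set $R$ of such primes with $\Nm(\p)\nmid a_\p$ has lower density at least $1-\mu(C)>0$, the second alternative.

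The crux is the combination of twisting by $\chi_\ell^{-1}$ with the choice of $n$: reducing $\varrho$ directly, as for Proposition~\ref{proposition: Katz}, controls only $a_\p$ modulo $\lambda^n$ and hence not $a_\p/p$ as $p$ varies, whereas reducing $\sigma$ controls $a_\p/p$ — and the size of $B$, hence how large $n$ must be, is exactly where a general number field $E$ demands more than $E=\Q$. A secondary delicate step is the last one: applying Chebotarev to $C$, which is not the preimage of a subset of a finite quotient, and extracting the rigid isomorphism $\varrho|_{G_{k'}}\simeq\bigoplus_{i=1}^d E_\lambda(1)$ from $\mu(C)=1$ via semisimplicity.
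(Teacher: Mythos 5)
Your proof is correct and follows essentially the same route as the paper's: where you twist by $\chi_\ell^{-1}$ and reduce $\sigma=\varrho\otimes\chi_\ell^{-1}$ modulo $\lambda^n$, the paper instead reduces $\varrho$ itself modulo $\lambda^n$ and adjoins $\Q(\zeta_{\ell^n})$ to $k'$ so that $\Nm(\p)\equiv 1\pmod{\ell^n}$, and both choices produce the same congruence $a_\p/\Nm(\p)\equiv d\pmod{\lambda^n}$ for the bad primes, which the weight-2 bound then upgrades to the equality $a_\p=d\,\Nm(\p)$. Your endgame via the closed conjugation-invariant set $C$ and Haar measure is a clean equivalent of the paper's ``suppose $\overline R$ has upper density $1$'' dichotomy followed by Chebotarev and semisimplicity.
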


\begin{proof}
Choose an integer $n\geq 1$ such that $\ell^n>(2d)^{[E:\Q]}$.
Let $\overline \varrho$ be as in \eqref{equation: reduction}, the reduction of $\varrho$ modulo $\lambda^n$. Choose $k'/k$ large enough so that $\overline \varrho|_{G_{k'}}$ is trivial and so that $k'$ contains $\Q(\zeta_{\ell^n})$, where $\zeta_{\ell^n}$ is a root of unity of order $\ell^n$.
Let $R$ (resp. $\overline R$) be the set of primes of $k'$ of absolute degree 1, not in $S'$, and such that $a_\p$ is not divisible (resp. is divisible) by $\Nm(\p)$. Suppose that $\overline R$ has upper density 1. We will show that $\varrho|_{G_{k'}}$ is isomorphic to 
$$
\varrho_0:=\bigoplus_{i=1}^d E_\lambda(1)\,.
$$
Let $\p$ be a prime in $\overline R$, let $p$ denote its absolute norm, and let $b_\p\in \Oo_E$ be such that $a_\p=p b_\p$. Note that since $\Frob_\p|_{\Q(\zeta_{\ell^n})}$ is the identity, we have $p\equiv 1\pmod {\ell^n}$. Together with the triviality of $\overline\varrho|_{G_{k'}}$, this implies that
\begin{equation}\label{equation: bcongruence}
b_\p\equiv a_\p \equiv d \pmod{\lambda^n}\,.
\end{equation}
In particular, $\ell^n$ divides $\Nm_{E/\Q}(b_\p-d)$. Bearing in mind that $\varrho$ is of weight 2, we find
$$
|\Nm_{E/\Q}(b_\p-d)|=\prod_{\sigma\colon E\rightarrow \C}|\sigma(b_\p)-d|\leq (2d)^{[E:\Q]}\,.
$$
Together with the choice of $n$ and the above divisibility condition, this implies that $\Nm_{E/\Q}(b_\p-d)=0$. It follows that $a_\p=pd$. In other words, we have seen that
$$
\Tr(\varrho(\Frob_\p))=\Tr(\varrho_0(\Frob_\p))\qquad \text{for every $\p\in \overline R$}\,.
$$
Since $\varrho$ is semisimple and $\overline R$ has upper density 1, the Chebotarev density theorem implies that $\varrho_0\simeq \varrho|_{G_{k'}}$.  
\end{proof}

\begin{remark}
The previous proposition could be proved by reducing to the $E=\Q$ case and then invoking \cite[(2.7.2)]{Ogu82}. Indeed, it suffices to consider the representation $\Res_{E/\Q} (\varrho)$ on $\Q_\ell^{d[E:\Q]}$ obtained from $\varrho$ by regarding $E_\lambda^d$ as a $\Q_\ell$-vector space of dimension $d[E:\Q]$, and then observe that $\Tr(\Res_{E/\Q}(\varrho))=\Tr_{E/\Q}(\Tr(\varrho))$. We have included a direct proof for the reader's convenience as we will need to point to a step of the proof below.
\end{remark}

The existence of a nonzero density set of ordinary primes for an abelian surface follows from the above proposition applied to the representation attached to the action of $G_k$ on $\wedge^2 V_\ell(A)$, after noting that \eqref{equation: alternative} cannot hold for $\wedge^2 V_\ell(A)$. The latter is implied by the following criterion, which we record for later use.

\begin{proposition}\label{proposition: refinement}
Let $\varrho$ be as in Proposition \ref{proposition: Katz}, but assume further that $\varrho\simeq \wedge^2\theta$, where $\theta$ is a continuous semisimple representation unramified outside $S$, $\Oo_E$-integral, of weight~$1$, and dimension $\geq 3$. Then alternative \eqref{equation: alternative} does not occur and~$R$ can be taken of density 1.
\end{proposition}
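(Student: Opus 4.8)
The plan is to reduce everything to one rigidity statement for $\theta$ that exploits both its weight and the hypothesis $\dim\theta\ge 3$. First note that, since $\theta$ has weight $1$, the representation $\varrho\simeq\wedge^2\theta$ has weight $2$; being a subrepresentation of $\theta\otimes\theta$ it is semisimple, and it is plainly $\Oo_E$-integral and unramified outside $S$, so Proposition~\ref{proposition: Ogus} applies to it. Write $m:=\dim\theta\ge 3$ and $d:=\dim\varrho=\binom{m}{2}$. The key claim is: if $\p$ is a prime of a finite extension $k''/k$, of absolute residue degree $1$, lying outside $S''$, and with $a_\p=d\cdot\Nm(\p)$, then $\sqrt{\Nm(\p)}\in E$.

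To prove the claim, let $\gamma_1,\dots,\gamma_m\in\overline{\Q}$ be the eigenvalues of $\theta(\Frob_\p)$, so that $a_\p=\Tr\big(\wedge^2\theta(\Frob_\p)\big)=\sum_{i<j}\gamma_i\gamma_j$. Fix a complex embedding $\iota$ of the number field generated by the $\gamma_i$. Since $\theta$ is of weight $1$, $|\iota(\gamma_i)|=\sqrt{\Nm(\p)}$ for each $i$, so the $d=\binom{m}{2}$ complex numbers $\iota(\gamma_i)\iota(\gamma_j)$ (for $i<j$) all have modulus $\Nm(\p)$, while their sum $\iota(a_\p)=d\cdot\Nm(\p)$ has modulus $d\cdot\Nm(\p)$; equality in the triangle inequality forces $\iota(\gamma_i)\iota(\gamma_j)=\Nm(\p)$ for all $i<j$. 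As $\iota$ was arbitrary, $\gamma_i\gamma_j=\Nm(\p)$ in $\overline{\Q}$ for all $i<j$, and since $m\ge 3$, comparing these relations gives $\gamma_1=\dots=\gamma_m=:\gamma$ with $\gamma^2=\Nm(\p)$. The characteristic polynomial of $\theta(\Frob_\p)$ is then $(T-\gamma)^m$, whose coefficient $-m\gamma$ of $T^{m-1}$ lies in $\Oo_E$ by $\Oo_E$-integrality of $\theta$; hence $\gamma\in E$ and $\sqrt{\Nm(\p)}=\pm\gamma\in E$.

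Granting the claim, I would conclude using that a number field contains $\sqrt p$ for only finitely many rational primes $p$ (as $\Q(\sqrt{p_1},\dots,\sqrt{p_r})$ has degree $2^r$ over $\Q$). Alternative~\eqref{equation: alternative} cannot occur: if $\varrho|_{G_{k''}}\simeq\bigoplus_{i=1}^{d}E_\lambda(1)$ for some finite $k''/k$, then $\varrho(\Frob_\p)$ acts as the scalar $\chi_\ell(\Frob_\p)$ for every prime $\p$ of $k''$ outside $S''$, and when moreover $\p\nmid\ell$ this scalar is $\Nm(\p)$, so $a_\p=d\cdot\Nm(\p)$ for all such $\p$ of residue degree $1$; by the claim $\sqrt{\Nm(\p)}\in E$ for all of them, contradicting the existence of infinitely many such $\p$ whose norm is a prime that is not a square in $E$. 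Proposition~\ref{proposition: Ogus} then supplies a finite extension $k'/k$ and a set $R$; to upgrade $R$ to density $1$, I rerun the construction in the proof of that proposition: choose $n$ with $\ell^n>(2d)^{[E:\Q]}$ and $k'/k$ finite with $\overline\varrho|_{G_{k'}}$ trivial and $\Q(\zeta_{\ell^n})\subseteq k'$, and let $\overline R$ be the set of primes $\p$ of $k'$ of absolute residue degree $1$ outside $S'$ with $\Nm(\p)\mid a_\p$. That argument yields $a_\p=d\cdot\Nm(\p)$ exactly for every $\p\in\overline R$; by the claim each such $\p$ satisfies $\sqrt{\Nm(\p)}\in E$, so $\overline R$ is finite. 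Since the primes of $k'$ of absolute residue degree $1$ outside $S'$ have density $1$, their complement with respect to the finite set $\overline R$---which is exactly $R$---has density $1$.

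None of the individual steps is deep; the one point needing care is that I must extract from the \emph{proof} of Proposition~\ref{proposition: Ogus}, not merely from its statement, the \emph{exact} equality $a_\p=d\cdot\Nm(\p)$ for every $\p\in\overline R$ (a congruence would not do), because it is this exactness---together with the weight-$1$ rigidity of $\theta$---that collapses the Frobenius eigenvalues and manufactures the square root $\sqrt{\Nm(\p)}$ inside $E$. Apart from that, one only uses the two elementary facts invoked above, that $E$ contains square roots of only finitely many rational primes and that the degree-one primes have density $1$.
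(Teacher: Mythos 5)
Your proof is correct and follows the same strategy as the paper's: show that the exceptional set $\overline R$ from the proof of Proposition~\ref{proposition: Ogus} is finite by observing that $a_\p=d\cdot\Nm(\p)$ forces, via the weight-$1$ condition and $\dim\theta\ge 3$, all Frobenius eigenvalues of $\theta$ to be $\pm\sqrt{\Nm(\p)}$, whence $\sqrt{\Nm(\p)}\in E$ by $\Oo_E$-integrality (you read this off the $T^{m-1}$-coefficient of the characteristic polynomial, the paper off the trace, which is the same thing). You merely make explicit two steps the paper leaves implicit—the triangle-inequality argument that collapses the eigenvalues of $\wedge^2\theta(\Frob_\p)$ to $\Nm(\p)$, and the semisimplicity and weight-$2$ verification needed to invoke Proposition~\ref{proposition: Ogus}.
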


\begin{proof}
Let $\overline R$ be as in the proof of Proposition \ref{proposition: Ogus}. It suffices to show that this set is finite. Suppose that $\p$ belongs to $\overline R$. In the proof of Proposition \ref{proposition: Ogus} we have seen that then $\Tr(\varrho(\Frob_\p))=pd$. This implies that the eigenvalues of $\Frob_\p$ acting on $\wedge^2\theta$ must all be $p$. Note that $\theta(\Frob_\p)$ has at least \emph{three}\footnote{Note that this argument amusingly fails if we cannot appeal to an auxiliary third eigenvalue. In fact, for an elliptic curve $A$ we have $\wedge^2V_\ell(A)\simeq \Q_\ell(1)$ and $\overline R$ has density $1$.} eigenvalues $\alpha$, $\beta$, $\gamma$, which must satisfy
$$
\alpha\beta=p\,,\qquad \alpha \gamma=p\,,\qquad \beta\gamma=p\,.
$$ 
These equations force $\alpha,\beta,\gamma$ to either be all $\sqrt p$ or all $-\sqrt{p}$. As a consequence, either the eigenvalues of $\theta(\Frob_\p)$ are all equal to $\sqrt p$ or all equal to $-\sqrt p$. This implies that $\pm\dim(\theta)\sqrt p=\Tr(\theta(\Frob_\p))\in \Oo_E$, which can only happen for finitely many~$p$. 
\end{proof}

The following criterion is extracted from \cite{Ser81}.

\begin{proposition}[After Serre]\label{proposition: Serre}
Let $\varrho\colon G_k\rightarrow \Aut(E_\lambda^2)$ be a continuous semisimple representation, unramified outside a finite set $S$ of primes of $k$, $\Oo_E$-integral of some weight $w\geq 1$. Suppose that $\det(\varrho)=\varepsilon \chi_\ell^w$, where $\varepsilon$ is a finite order character. If there exists a nonzero density set of primes~$\p$ of $k$ such that $a_\p=0$, then $\varrho$ is potentially abelian.  
\end{proposition}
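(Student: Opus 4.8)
The plan is to argue by contradiction, combining the structure of the Zariski closure of the image of $\varrho$ with Chebotarev equidistribution. Let $H\subseteq\GL_2$ be the Zariski closure of $\varrho(G_k)$ and $H^{0}$ its identity component. Since $\varrho$ is semisimple, $H$ and hence $H^{0}$ is reductive, so $E_\lambda^{2}$ is a semisimple $H^{0}$-module; there are then two cases. If $H^{0}$ acts reducibly, $E_\lambda^{2}$ splits as a sum of two characters of $H^{0}$ and $H^{0}$ lies in a maximal torus. If $H^{0}$ acts irreducibly, then $H^{0}$ is not a torus, so its derived group is a nontrivial connected semisimple subgroup of $\GL_2$; a positive-dimensional connected semisimple subgroup of $\PGL_2$ must be all of $\PGL_2$ (by dimension), so the image of this derived group in $\PGL_2$ is $\PGL_2$, and being semisimple with a faithful $2$-dimensional representation it is $\mathrm{SL}_2$ or $\PGL_2$; the latter has no faithful $2$-dimensional representation, so it is $\mathrm{SL}_2$ and $H^{0}\supseteq\mathrm{SL}_2$. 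By definition $\varrho$ is potentially abelian when $\varrho(G_{k'})$ is abelian for some finite $k'/k$, and since $H$ is reductive this holds exactly when $H^{0}$ is a torus; so it suffices to exclude $H^{0}\supseteq\mathrm{SL}_2$.

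Assume $H^{0}\supseteq\mathrm{SL}_2$. The group $\mathcal{G}:=\varrho(G_k)$ is the continuous image of a profinite group, hence compact, hence closed in $\GL_2(E_\lambda)$ and Zariski-dense in $H$; normalize its Haar measure $\mu$ so that $\mu(\mathcal{G})=1$, and put $\mathcal{G}^{0}:=\mathcal{G}\cap H^{0}$, a finite-index subgroup which is Zariski-dense in $H^{0}$. Since $\det(\varrho(\Frob_\p))=\varepsilon(\Frob_\p)\Nm(\p)^{w}\neq 0$, the condition $a_\p=0$ is exactly the condition $\varrho(\Frob_\p)\in C$, where $C:=\{g\in\mathcal{G}:\Tr(g)=0\}$ is closed and conjugation-invariant. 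By Chebotarev equidistribution, the assumption that $\{\p\notin S:a_\p=0\}$ has positive density forces $\mu(C)>0$; decomposing $\mathcal{G}$ into finitely many cosets of $\mathcal{G}^{0}$, we obtain a coset representative $c$ with $\mu\bigl(\{g\in c\,\mathcal{G}^{0}:\Tr(g)=0\}\bigr)>0$.

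I then contradict this by showing that set is $\mu$-null. The regular function $g\mapsto\Tr(g)$ on the coset $c\,H^{0}$ is not identically zero: since $H^{0}$ contains a conjugate of $\mathrm{SL}_2$ and $\Tr$ is conjugation-invariant, its restriction to a suitable translate of that copy of $\mathrm{SL}_2$ takes the form $h\mapsto\Tr(ah)$ with $a\in\GL_2$ and $h$ ranging over $\mathrm{SL}_2$, and as $\mathrm{SL}_2$ linearly spans the space of $2\times 2$ matrices while the trace pairing is nondegenerate, this vanishes identically only if $a=0$, which is absurd. Hence $\{g\in c\,H^{0}:\Tr(g)=0\}$ is a proper Zariski-closed subset of the irreducible variety $c\,H^{0}$. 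Since a nonempty open subset of $\mathcal{G}^{0}$ is Zariski-dense in $H^{0}$ (cover $\mathcal{G}^{0}$ by finitely many translates of it and use the irreducibility of $H^{0}$), the polynomial $g\mapsto\Tr(g)$ does not vanish identically on any analytic chart of $c\,\mathcal{G}^{0}$, so on each chart its zero locus is the vanishing locus of a nonzero analytic function and hence is $\mu$-null; summing over the finitely many charts gives the contradiction. Therefore $H^{0}$ is a torus and $\varrho$ is potentially abelian.

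The step I expect to require the most care is the passage from positive density of primes to $\mu(C)>0$, since $C$ is only known to be closed and could a priori have boundary of positive measure. The clean way is to pass to the (compact Hausdorff) space of conjugacy classes of $\mathcal{G}$, where $C$ descends to a closed set $\bar C$; one approximates $\bar C$ from outside by open sets whose boundary is $\mu$-null (e.g.\ small distance-neighborhoods), pulls these back to conjugation-invariant open $U\supseteq C$ with $\mu(\partial U)=0$, applies Chebotarev to each such $U$ (where it gives density $=\mu(U)$), and combines $\{a_\p=0\}\subseteq\{\varrho(\Frob_\p)\in U\}$ with $\mu(U)\downarrow\mu(C)$. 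Only the vanishing of $a_\p$ is used in the argument; note that by Cayley--Hamilton $a_\p=0$ in addition forces $\varrho(\Frob_\p)^{2}$ to be a scalar matrix, equal to $-\varepsilon(\Frob_\p)\Nm(\p)^{w}$ times the identity, a fact we do not otherwise need here.
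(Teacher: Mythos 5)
Your proof is correct and follows the same two-step strategy as the paper: first show the identity component $H^0$ of the Zariski closure contains $\mathrm{SL}_2$ when $\varrho$ is not potentially abelian, then show that this makes the trace-zero locus density (equivalently Haar-measure) zero, contradicting the hypothesis. The difference is one of exposition: the paper treats both steps as citations, invoking Ribet \cite[Thm.\ 4.3, Prop.\ 4.4]{Rib77} for the first and Serre \cite[Prop.\ 13]{Ser81} for the second, whereas you reprove them. Your derivation of $H^0\supseteq\mathrm{SL}_2$ from the structure theory of connected reductive subgroups of $\GL_2$ is clean and in fact only uses the assumed semisimplicity of $\varrho$ (not the weight or determinant hypotheses, which Ribet's original argument needed because he did not assume semisimplicity); your trace-pairing argument that $\Tr$ is not identically zero on any coset $cH^0$ is precisely the unstated computation behind the paper's assertion that ``on no connected component of $G_\lambda$ is the trace identically zero.'' The Chebotarev-plus-$p$-adic-measure argument you sketch for the second step is essentially the content of Serre's Proposition~13; the crucial analytic input you are implicitly using --- that the zero locus of a non-identically-vanishing $\ell$-adic analytic function on a compact $\ell$-adic manifold has Haar measure zero, combined with Zariski-density of $\mathcal{G}^0$ in $H^0$ to see that $\Tr$ does not vanish on any chart --- is exactly what Serre's result packages, and you are right to flag the passage from positive Chebotarev density of a closed conjugation-invariant set to positive Haar measure as the point requiring care. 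In short: same approach, with the paper's black boxes unpacked.
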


\begin{proof}
Let $G_\lambda$ denote the Zariski closure of the image of $\varrho$ and let $\mathfrak g_\lambda$ denote the Lie algebra of $G_\lambda$. If $\varrho$ is not potentially abelian, then by \cite[Prop. 4.4]{Rib77} (which relies on \cite[Thm. 4.3]{Rib77}) we have that $\mathfrak g_\lambda$ is irreducible and nonabelian, and it thus contains $\mathfrak{sl}_2$ (note that \cite[Thm. 4.3, Prop. 4.4]{Rib77} concern the $\lambda$-adic representation attached to a modular form; however, note that the properties used in their proofs are precisely the hypotheses we make on $\varrho$ in the statement of the proposition). This implies that on no connected component of $G_\lambda$ is the trace identically equal to the zero function. This implies the proposition, as by \cite[Prop. 13, Cor. 1 to Prop. 15]{Ser81} the density of the set of primes $\p$ such that $a_\p=0$ is the proportion of connected components of $G_\lambda$ on which the trace is identically zero. 
\end{proof}

\section{$\lambda$-adic representations attached to abelian varieties} Let $E$ be a number field and let $A$ be an abelian variety defined over $k$ such that there exists a $\Q$-algebra embedding $E\hookrightarrow \End(A)\otimes \Q$. Choose a rational prime $\ell$. From the $\ell$-adic representation $\varrho_{A,\ell}\colon G_k \rightarrow \Aut(V_\ell(A))$, we will construct a $\lambda$-adic reprentation for every prime $\lambda$ of $E$ dividing $\ell$. Let $V_\lambda(A)$ denote the tensor product $V_\ell(A) \otimes_{E\otimes \Q_\ell} E_\lambda$ taken with respect to the $\Q_\ell$-algebra map
$$
E \otimes \Q_\ell\simeq \bigoplus_{\lambda' \mid \ell} E_{\lambda'} \rightarrow E_\lambda\,, 
$$
where the second map is the natural projection. Note that $V_\lambda(A)$ is an $E_\lambda$-vector space of dimension $2g/[E:\Q]$, where  $g$ denotes the dimension of $A$. Let $S$ be a finite set containing the set of primes of bad reduction for $A$.  Let $S_\ell$ denote the union of $S$ and the set of primes of $k$ lying above $\ell$. The $\Q_\ell$-linear action of $G_k$ on $V_\ell(A)$ gives rise to a $\lambda$-adic Galois representation
$$
\varrho_{A,\lambda}\colon G_k\rightarrow \Aut(V_\lambda(A))\,,
$$
which is unramified outside $S_\ell$, $\Oo_E$-integral, and of weight $1$ (see \cite[III-16]{Ser89}, \cite[Chap. II]{Rib76}, or \cite[\S0.11]{Zar89} for detailed explanations of these facts). 

Every embedding $\sigma\colon E\hookrightarrow \overline \Q_\ell$, singles out a prime $\lambda_\sigma$ of $E$ above $\ell$ such that~$\sigma$ factors via $E_\lambda$. For simplicity, we choose hereafter the prime $\ell$ to be totally split in~$E$. In this case the association of $\lambda:=\lambda_\sigma$ to $\sigma$ defines a bijection between the set of embeddings of $E$ into $\Q_\ell$ and the set of primes of $E$ over $\ell$. Whenever $E$ is Galois over $\Q$, it will be useful to regard these two sets as $\Gal(E/\Q)$-torsors. Let $V_\sigma(A)$ denote the tensor product $V_\lambda(A) \otimes_{E_\lambda} \Q_\ell$ taken with respect to the the isomorphism of $E_\lambda$ with $\Q_\ell$ determined by~$\sigma$. We equip it with an action of $G_k$ by letting this group act via $\varrho_{A,\lambda}$ on $V_\lambda(A)$ and trivially on $\Q_\ell$. We then have an isomorphism
\begin{equation}\label{equation: decTate}
V_\ell(A) \simeq \bigoplus_{\sigma\colon E\hookrightarrow \Q_\ell} V_\sigma(A)
\end{equation}
of $\Q_\ell[G_k]$-modules.
For any prime $\p$ not in $S_\ell$, we will denote by $P_\p(V_\sigma(A),T)\in \Oo_E[T]\subseteq \Q_\ell[T]$ (resp. by $a_{\p,\sigma}\in \Oo_E\subseteq \Q_\ell$) the characteristic polynomial (resp. the trace) of $\Frob_\p$ acting on $V_\sigma(A)$.
 
In case $E$ is a CM field, given an embedding $\sigma\colon E\hookrightarrow \Q_\ell$ we will denote by $\overline \sigma$ the composite of $\sigma$ with complex conjugation. Similarly, given a prime $\lambda$ of $E$ dividing~$\ell$ we will denote by $\overline \lambda$ the conjugate of $\lambda$ by complex conjugation.

As a preparation for the following section, we next observe that the mere existence of $\varrho_{A,\lambda}$ as an $\Oo_E$-integral representation is sufficient to ensure the existence of a nonzero density set $R$ of ordinary primes for $A$, whenever $A$ is an abelian variety with potential complex multiplication. 

\begin{remark}\label{remark: CM} Suppose that $A$ is an abelian variety with potential complex multiplication, that is, suppose that there exists a number field $E$ of degree $2g$ together with a $\Q$-algebra embedding $E\hookrightarrow \End(A_\Qbar)\otimes \Q$.  For the purpose of showing the existence of a nonzero density set of ordinary primes, we may assume without loss of generality that $E\hookrightarrow \End(A)\otimes \Q$. Choose a prime $\ell$ totally split in $E$. Let $R$ denote the set of primes $\p$ of $k$ outside $S$ of absolute residue degree $1$ and such that $\Nm(\p)$ is totally split in $E$. For every embedding $\sigma$ of $E$ into $\Q_\ell$, we can consider the 1-dimensional $\Q_\ell$-vector space $V_{\sigma}(A)$. In this situation, by~\eqref{equation: decTate}, the $a_{\p,\sigma}$ are the roots of $P_\p(A,T)$. From the Weil conjectures, for every $\p\in R$ we then have 
$$
a_{\p,\sigma}\cdot a_{\p,\overline \sigma}=\Nm(\p)\,.
$$
As $a_{\p,\sigma}$ and $a_{\p,\overline \sigma}$ lie in $\Oo_E$ and the rational prime $p=\Nm(\p)$ is totally split in~$E$, one among them must be a $p$-adic unit. The fact that every prime $\p$ in $R$ is ordinary for~$A$ is thus clear.
\end{remark}

In order to prove Theorems \ref{theorem: main}, \ref{theorem: main2} and \ref{theorem: main2p} we will proceed similarly as in the above remark, but we will need to additionally apply Propositions \ref{proposition: Katz},~\ref{proposition: Ogus}, and~\ref{proposition: Serre} to the representations $\varrho_{A,\lambda}$. 

\section{Proof of Theorem \ref{theorem: main}}
Suppose now that $A$ is an abelian threefold and $E$ is an imaginary quadratic field  such that there exists a $\Q$-algebra embeddding $E\hookrightarrow \End(A_\Qbar)\otimes \Q$. 
Without loss of generality we may enlarge $k$ and assume that $E\hookrightarrow \End(A)\otimes \Q$. We may  further enlarge $k$ so that it contains $E$ itself. Choose a rational prime~$\ell$ totally split in $E$, so that the two primes $\lambda,\overline \lambda$ above $\ell$ correspond to the two embeddings $\sigma,\overline\sigma$ of $E$ into $\Q_\ell$.
Let $k'$ be the finite extension of $k$ produced by Proposition~\ref{proposition: Ogus} when applied to the representation $\wedge^2\varrho_{A,\lambda}$ and the set $S_\ell$. Note that Proposition~\ref{proposition: refinement} applies, and thus alternative \eqref{equation: alternative} does not occur and the set~$R$ produced by Proposition \ref{proposition: Ogus} has density 1. We now further enlarge $k$ so that it contains $k'$. By making obvious abuses of the notation, we will regard $R$ as a set of primes of $k$.

Let $\p$ be a prime of $R$. We claim that $\p$ is ordinary. Having assumed that~$k$ contains $E$, we have that the prime $p=\Nm(\p)$ is totally split in $E$. Let $\PP$ be a prime of $E$ above $\Nm(\p)$ and let~$v$ denote an extension to $\overline \Z$ of the $\PP$-adic valuation~$v_\PP$. Let us use the notation
$$
P_\p(V_\sigma(A),T)=T^3-a_{\p,\sigma}T^2+b_{\p,\sigma}T-c_{\p,\sigma}\in \Oo_E[T]\,,
$$
and define $a_{\p,\overline\sigma},b_{\p,\overline\sigma},\dots$ similarly for $P_\p(V_{\overline\sigma}(A),T)$. In particular, we have that $b_{\p,\sigma}=\Tr(\wedge ^2 V_\sigma(A)(\Frob_\p))$. Note that 
$$
c_{\p,\sigma} \cdot c_{\p,\overline \sigma}=\det(V_\sigma(A)(\Frob_\p))\cdot\det(V_{\overline \sigma}(A)(\Frob_\p))=\det(\varrho_{A,\ell}(\Frob_\p))=p^3\,,
$$
which implies that $v(c_{\p,\sigma})+v(c_{\p,\overline \sigma})=3$. We may thus assume that $v(c_{\p,\sigma})\geq 1$ and $v(c_{\p,\overline\sigma})\geq 1$ as otherwise we are obviously done\footnote{As one can deduce from Proposition~\ref{proposition: RibetShimura}, these valuations exhibit a global behavior. This is negligible here, but it will become relevant in the proof of Theorem~\ref{theorem: main3}.}.
As $\p$ belongs to~$R$, we have that $p$ does not divide $b_{\p,\sigma}$. Therefore we cannot simultaneously have $v(b_{\p,\sigma})\geq 1$ and $v(b_{\p,\overline\sigma})\geq 1$. By symmetry between $\sigma$ and $\overline \sigma$, we may assume that $v(b_{\p,\sigma})=0$. Then the usual Newton polygon argument ensures that $P_\p(V_\sigma(A),T)$ has two roots $\alpha_\p,\beta_\p$ which are $v$-adic units. Together with the fact that $v(c_{\p,\sigma})\geq 1$, this implies that the third root $\gamma_\p$ of  
$P_\p(V_\sigma(A),T)$ must have $v$-adic valuation equal to $1$ (note that the valuation of any root $\xi_\p$ of $P_\p(A,T)$ is $\leq 1$, as $\xi_\p$, $\overline\xi_\p$ are algebraic integers satisfying $\xi_\p\overline\xi_\p=p$ and $p$ is totally split in $E$). Since complex conjugation interchanges the roots of $P_\p(V_\sigma(A),T)$ and $P_\p(V_{\overline\sigma}(A),T)$, the roots of the later polynomial are $p/\alpha_\p, p/\beta_\p,$ and $p/\gamma_\p$, which have $v$-adic valuations 1, 1 and 0.

\section{Proof of Theorem \ref{theorem: main2}}\label{section: main2}
Suppose now that $A$ is an abelian variety of dimension~$g$ and $E$ is a Galois number field of degree $g$ such that there exists a $\Q$-algebra embedding $E\hookrightarrow \End(A_\Qbar)\otimes \Q$. Without loss of generality we may enlarge $k$ and assume that $E\hookrightarrow \End(A)\otimes \Q$. We may further enlarge $k$ so that it contains $E$ itself. Choose a rational prime $\ell$ totally split in $E$. 
Let $k'$ be the finite extension of $k$ produced by Proposition~\ref{proposition: Katz} when applied to $\varrho_{A,\lambda}$ and the set $S_\ell$. Let $R$ be the set of primes it produces. Enlarge $k$ to $k'$ and regard $R$ as a set of primes of~$k$.

Let $\p$ be a prime of $R$. We claim that $u_\p(A) \geq \lceil g/2\rceil$. As $k$ contains $E$, the prime $p=\Nm(\p)$ is totally split in $E$. As the set of embeddings $\sigma$ of $E$ into $\Q_\ell$, the set of primes of~$E$ above $p$ is a $\Gal(E/\Q)$-torsor. Let $\PP$ be a prime of $E$ above $p$, and let~$v$ denote an extension to $\overline \Z$ of the $\PP$-adic valuation $v_\PP$. 
Recall that $a_{\p,\sigma}$ denotes the trace of $\Frob_\p$ acting on $V_\sigma(A)$.
By \eqref{equation: strong}, at most $g/2=[E:\Q]/2$ of the primes of~$E$ above $p$ appear in the factorization of $a_{\p,\sigma}$. From the $\Gal(E/\Q)$-torsor point of view, this translates into the assertion that the number of $\sigma$ such that the $v$-adic valuation of $a_{\p,\sigma}$ is nonzero is at most $\lfloor g/2\rfloor$. As $P_\p(A,T)=\prod_{\sigma}P_{\p}(V_{\sigma}(A),T)$, the theorem follows.

\section{Proof of Theorem \ref{theorem: main2p}} By successively replacing $k$ by $K$ and by taking the compositum with a quadratic field, we may assume that $E\hookrightarrow \End(A)\otimes \Q$ and that $kE/k$ is Galois of degree~$3$. Let $R$ be the set of primes of $k$ of absolute degree~$1$ which are inert in $kE$. Take a rational prime $\ell$ that splits completely in the Galois closure $L$ of $E$, let~$\lambda$ be a prime of $E$ lying above $\ell$, and let $\sigma$ be the associated embedding of $E$ into~$\Q_\ell$. By \cite[Lem. 4.5.1]{Rib76}, we have $\det(\varrho_{A,\lambda})=\chi_\ell$, and in particular $\varrho_{A,\lambda}$ satisfies the hypotheses of Proposition \ref{proposition: Serre}. Let $\p$ be a prime in $R$, let $p$ denote its absolute norm $\Nm(\p)$, and let $\PP$ denote the prime of $E$ lying above $p$. Let $v$ denote an extension to $\overline \Z$ of the $\PP$-adic valuation~$v_\PP$. We claim that $v(a_{\p,\sigma})$ is independent of the choice of~$\sigma$. Indeed, let~$\sigma_0$ be an embedding of~$L$ into $\Q_\ell$ extending~$\sigma$. Any embedding of $E$ into~$\Q_\ell$ is obtained by precomposing $\sigma_0$ with the inclusion $E\subseteq L$ and an element of the order 3 subgroup $H$ of $\Gal(L/E)$. The claim then follows from the fact that $H$ is the stabilizer in $\Gal(L/E)$ of the primes of~$L$ above~$\PP$. Therefore, if $v(a_{\p,\sigma})=0$ for every~$\p$ in a nonzero density subset of~$R$, then the equality $P_\p(A,T)=\prod_{\sigma}P_{\p}(V_{\sigma}(A),T)$ implies the theorem. Otherwise, we have that $\PP$ divides the ideal $(a_{\p,\sigma})$ for every~$\p$ in a nonzero density subset $R'$ of~$R$. By comparing the norms of the two ideals, we immediately see that this implies that $a_{\p,\sigma}=0$ for every $\p$ in $R'$. By Proposition~\ref{proposition: Serre}, we have that $\varrho_{A,\lambda}$ is potentially abelian. We may then derive the existence of a nonzero density set of primes by applying the argument of Remark~\ref{remark: CM} (in fact, in this case $A$ is $\Qbar$-isogenous to the power of an abelian variety with complex multiplication).

\section{Hodge-Tate theoretic input} Return to the general setting in which $A$ is an abelian variety defined over $k$ of dimension $g$ and $E$ a Galois number field that embeds into $\End(A)\otimes \Q$. Throughout this section we will assume that~$k$ contains $E$, that the extension $k/E$ is Galois, and that $\ell$ is a rational prime totally split in $k$.

Let $\lambda$ be a prime of $E$ lying above~$\ell$. As $\varrho_{A,\ell}$ has the Hodge--Tate property, so has it $\varrho_{A,\lambda}$ by \cite[Prop. 1.5.3]{Rib76}. Therefore $\delta_\lambda=\det(\varrho_{A,\lambda})$ also has the Hodge--Tate property, and from \cite[p. 761]{Rib76} one deduces that $\delta_\lambda$ is associated with an algebraic Hecke character $\delta$ of type $A_0$ with values in $E$ of the field $k$ in the sense of Weil \cite{Wei55}. Via global class field theory, we may think of $\delta$ as a homomorphism from the group of fractional ideals of $k$ coprime to some integral ideal $\Mm$ of $\Oo_k$, a modulus for $\delta$. In particular, for every embedding $\sigma$ of $E$ into $\C$, there exists an integer $n_\sigma$ such that
$$
\delta((\alpha))=\prod_{\sigma\colon E\hookrightarrow \C} \sigma(\alpha)^{n_\sigma[k:E]}
$$
for every $\alpha$ in $E^\times$ such that $\alpha\equiv^{\times} 1\pmod \Mm$, that is, for every $\alpha\in E^\times$ which is multiplicatively congruent to $1$ modulo $\Mm$. Note that the expression $\sum_{\sigma} n_\sigma \sigma\circ \Nm_{k/E}$ gives the infinity type of the Hecke character $\delta$. 
Loosely speaking, we will also refer to $(n_\sigma)_\sigma$ as the infinity type of~$\delta_\lambda$. 

As we used in the Proof of Theorem \ref{theorem: main2p}, by \cite[Lem. 4.5.1]{Rib76}, when $E$ is totally real and has degree equal to $g$, then $\delta_\lambda=\chi_\ell$. The purpose of this section is to describe in general the infinity type $(n_\sigma)_\sigma$. Fix an embedding $\iota\colon k \hookrightarrow \C$. 

\begin{definition}\label{definition: signature} For an embedding $\sigma$ of $E$ into $\C$, let
$$
r_\sigma=\dim_{\C}\big(H^0(A,\Omega^1_{A/k})\otimes_{k\otimes E,\iota\otimes\sigma} \C\big)
$$
denote the ``multiplicity of the action of $E$ via $\sigma$'' on $H^0(A_\C,\Omega^1_{A_\C/\C})$. Note that $\sum_\sigma r_\sigma=g$. We refer to $(r_\sigma)_\sigma$ as the signature of the pair $(A,E)$.
\end{definition}

Via the embedding $\iota$ and the inclusion $E\subseteq K$, we may freely identify the set of embeddings of $E$ into $\C$ with the Galois group $\Gal(E/\Q)$: for $\sigma \in \Gal(E/\Q)$, we will still denote by $\sigma$ the composition $\iota \circ \sigma$.

Let $\Ll$ be a prime of $k$ lying above~$\lambda$, and let $\C_\ell$ denote the completion of $\overline{ k_\Ll}$. We fix an embedding $\kappa: \C \hookrightarrow \C_\ell$ such that the composition $\kappa \circ \iota: k \hookrightarrow \C_\ell$ coincides with the natural inclusion $k\subseteq k_{\Ll}\subseteq \C_\ell$. In virtue of this compatibility, by abuse of language, we still denote by $\iota$, the natural inclusion $k\subseteq k_{\Ll}\subseteq \C_\ell$.
Even more, given $\sigma \in \Gal(E/\Q)$, we will still denote by~$\sigma$ the embedding obtained by composing $\sigma$ with any of the inclusions $E\subseteq k \subseteq k_{\Ll}\subseteq \C_\ell$. 

We may summarize our series of abuses of notation by saying that, for $\sigma \in \Gal(E/\Q)$, any composition of maps in the commutative diagram
$$
\xymatrix{
& & & & \C \ar[d]^{\kappa} \\
E\ar[r]^{\sigma} & E\ar[r] & k \ar[urr]^{\iota}\ar[r] & k _{\Ll}\ar[r]  & \C_\ell}
$$
departing from the leftmost occurrence of $E$ will still be called $\sigma$.

\begin{proposition}\label{proposition: RibetShimura}
The inverse infinity type $(n_{\sigma^{-1}})_\sigma$ and the complex conjugate signature $(r_{\overline\sigma})_\sigma$ coincide.
\end{proposition}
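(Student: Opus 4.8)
The plan is to compute the infinity type of $\delta_\lambda = \det(\varrho_{A,\lambda})$ by relating it to the Hodge–Tate decomposition of $V_\lambda(A)$ at primes of $k$ above $\ell$, and then matching that decomposition against the signature $(r_\tau)_\tau$ via Tate's theorem describing Hodge–Tate weights of the $\ell$-adic Tate module of an abelian variety with good reduction. Concretely, the Hodge–Tate decomposition of $V_\ell(A)\otimes_{\Q_\ell}\C_\ell$ as a $G_{k_w}$-module (for $w\mid \ell$) has weights $0$ and $1$, with the weight-$0$ part corresponding to $H^0(A,\Omega^1)^\vee$ and the weight-$1$ part to $H^1(A,\Oo_A)$; since $\ell$ is totally split in $E$, after tensoring down to $V_\sigma(A)$ the action of $E$ breaks this into $\tau$-isotypic pieces, and the multiplicity of $\tau$ in the weight-$1$ part is exactly $r_{\overline\tau}$ (the conjugate coming from the fact that $H^1(A,\Oo_A)$ is complex-conjugate, as a Hodge structure, to $H^0(A,\Omega^1)$). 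Taking determinants, the Hodge–Tate weight of $\delta_\lambda$ restricted to $G_{k_w}$ is $\sum_\tau r_{\overline\tau}\cdot(\text{something involving }\tau)$, and by the dictionary of \cite{Ser89}, III-50, between Hodge–Tate characters of $G_k$ with $k\supseteq E$ Galois and algebraic Hecke characters, this Hodge–Tate datum is precisely $(n_\tau)_\tau$ read through the correspondence between places of $k$ above $\ell$ and embeddings of $E$.

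The key steps, in order, are: (1) restrict $\varrho_{A,\lambda}$ to a decomposition group $G_{k_w}$ at a place $w\mid\ell$ of $k$, and use that $\varrho_{A,\ell}$ is Hodge–Tate (so $\varrho_{A,\lambda}$ is too, by \cite[Prop. 1.5.3]{Rib76}, as already invoked in the excerpt); (2) invoke Tate's theorem identifying the graded pieces of the Hodge–Tate decomposition of $T_\ell(A)$ with the coherent cohomology $H^1_{\mathrm{dR}}$-data, i.e. $\mathrm{gr}^0 \cong \mathrm{Lie}(A)$ and $\mathrm{gr}^1 \cong H^1(A,\Oo_A) \cong \mathrm{Lie}(A^\vee)^\vee$ (here good reduction is used, but after enlarging $k$ we may assume $A$ has good reduction at all primes above $\ell$); (3) decompose each graded piece under the totally-split action of $E\otimes\Q_\ell = \prod_\lambda E_\lambda$, matching the $\tau$-eigenspace of $\mathrm{Lie}(A)\otimes_{k,\iota}\C$ with multiplicity $r_\tau$ by Definition~\ref{definition: signature} — and hence, passing to the cotangent/dual side as appears in $H^1(A,\Oo_A)$, obtaining multiplicity $r_{\overline\tau}$ in $\mathrm{gr}^1$ for the embedding $\tau$; (4) take $\wedge^{\mathrm{top}}$ to get the Hodge–Tate character $\delta_\lambda|_{G_{k_w}}$, whose weight for the embedding $\tau$ is $r_{\overline\tau}$; (5) translate this local Hodge–Tate data at all $w\mid\ell$ into the global infinity type via the Serre/Tate dictionary, being careful about the inversion: the correspondence between the $\mathrm{Hom}(k_w,\C_\ell)$-indexed Hodge–Tate weights and the $\mathrm{Hom}(E,\C)$-indexed infinity type $(n_\tau)_\tau$ involves precomposition with $\mathrm{Nm}_{k/E}$ and an inversion of the embedding (this is the standard "$\chi_\ell$ has infinity type $-1$, not $+1$" sign issue), which is precisely what produces $(n_{\tau^{-1}})_\tau$ rather than $(n_\tau)_\tau$ on the Hecke side.

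I expect the main obstacle to be step (5): pinning down exactly which inverse/conjugate appears, since there are three independent sign conventions in play — the normalization of $\chi_\ell$ and Tate twists (fixed in the excerpt so that $E_\lambda(1)$ carries $\chi_\ell$), the identification of the Hodge–Tate weight of $\chi_\ell$, and the direction of the bijection between primes $\lambda\mid\ell$ and embeddings $\sigma\colon E\hookrightarrow\Q_\ell$ (which the excerpt asks us to view as a $\Gal(E/\Q)$-torsor). Getting the composite of these straight is what forces the statement to be phrased with the inverse infinity type $(n_{\tau^{-1}})_\tau$ on one side and the conjugate signature $(r_{\overline\tau})_\tau$ on the other, rather than $(n_\tau)_\tau$ versus $(r_\tau)_\tau$; I would handle this by checking the statement against the two sanity cases already in the paper — the CM case of Remark~\ref{remark: CM} and, crucially, the totally real case where $\delta_\lambda=\chi_\ell$ forces $(n_\tau)_\tau = (1,\dots,1)$, which must match $(r_{\overline\tau})_\tau$ with $r_\tau = 1$ for all $\tau$ (the RM signature) — and adjusting the conjugation/inversion bookkeeping until both cases come out consistently. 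The actual cohomological input (steps 1–4) is essentially Tate's theorem plus linear algebra over the étale $\Q_\ell$-algebra $E\otimes\Q_\ell$, and should be routine once the conventions are fixed.
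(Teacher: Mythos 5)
Your proposal is correct and follows essentially the same strategy as the paper: start from Tate's Hodge--Tate decomposition of $V_\ell(A)\otimes_{\Q_\ell}\C_\ell$, cut out the $\sigma$-isotypic piece $V_\sigma(A)\otimes\C_\ell$ using the $E\otimes\Q_\ell$-action, take determinants to read off the Hodge--Tate weight of $\delta_\lambda$ at primes above $\ell$, and translate via the dictionary in \cite{Ser89} between locally algebraic abelian characters and Hecke characters. The one place you genuinely diverge is in how the conjugate $r_{\overline\tau}$ enters. You propose deriving it from the archimedean fact that $H^1(A,\Oo_A)$ and $H^0(A,\Omega^1)$ are complex-conjugate Hodge pieces of $H^1(A_\C,\C)$, so that the multiplicity of $\tau$ in the weight-one graded piece is directly $r_{\overline\tau}$. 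The paper instead first identifies the \emph{weight-zero} multiplicity $s_\sigma$ of $V_\sigma(A)\otimes\C_\ell$ with $r_\sigma$ (by matching the invariant subspace against $H^0(A,\Omega^1)\otimes_{k\otimes E,\sigma}k_\Ll$), and then invokes the purely $\ell$-adic Weil-pairing isomorphism $V_\sigma(A)\simeq V_{\overline\sigma}(A)^\vee(1)$ to deduce $m_\sigma=s_{\overline\sigma}$. Both routes are valid; your archimedean argument is the more classical Hodge-theoretic one but requires an explicit comparison between the complex Hodge decomposition and the Hodge--Tate decomposition (beyond the base-change compatibility of coherent cohomology that both arguments already need), whereas the paper's Weil-pairing step keeps the entire bookkeeping on the $\ell$-adic side after the single application of Tate's theorem, which makes the convention-chasing you rightly flag in step (5) somewhat less treacherous.
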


\begin{proof}
 We depart from the isomorphism
\begin{equation}\label{equation: Tateiso}
V_\ell(A)(-1)\otimes_{\Q_\ell} \C_\ell\simeq H^1(A,\Oo_A)\otimes_{k,\iota} \C_\ell\oplus H^0(A,\Omega^1_{A/k})\otimes_{k,\iota}\C_\ell(-1) 
\end{equation}
proven by Tate \cite[Cor. 2, p. 180]{Tat67}. As in the discussion preceeding the proposition, let $\sigma\colon E\subseteq k\hookrightarrow k_\Ll$ denote the embedding associated to the prime $\Ll$. It induces a $\C_\ell$-algebra map $\C_\ell\otimes E\rightarrow \C_\ell$, which by abuse of notation we still denote by $\sigma$. We now take the tensor $\otimes_{\C_\ell\otimes E,\sigma}\C_\ell$ of \eqref{equation: Tateiso} with respect to this map, obtaining
$$
V_\sigma(A)(-1)\otimes_{\Q_\ell} \C_\ell\simeq H^1(A, \Oo_A)\otimes_{k\otimes E,\iota\otimes \sigma} \C_\ell \oplus H^0(A,\Omega^1_{A/k})\otimes_{k\otimes E,\iota\otimes \sigma} \C_\ell(-1)\,.
$$ 
Taking first the tensor of the above expression with $\C_\ell(1)$, then $G_{k_\Ll}$-invariants, and then the tensor with $\C_\ell$ again, \cite[Thm. 1]{Tat67} yields
\begin{equation}\label{equation: TateRibet}
(V_\sigma(A)\otimes_{\Q_\ell}\C_\ell)^{G_{k_\Ll}}\otimes \C_\ell\simeq H^0(A,\Omega^1_{A/k})\otimes_{k\otimes E,\iota \otimes \sigma} \C_\ell\,.
\end{equation}
Note that $V_\sigma(A)$, as a subspace of $V_\ell(A)$, has the Hodge--Tate property, and thus we have a noncanonical isomorphism 
\begin{equation}\label{equation: TateRibet2}
V_\sigma(A)\otimes_{\Q_\ell} \C_\ell\simeq \C_\ell(1)^{m_\sigma}\oplus \C_\ell^{s_{\sigma}}
\end{equation}
for certain integers $m_\sigma,s_\sigma \geq 0$. Therefore $\delta_\lambda\otimes_{\Q_\ell}\C_\ell|_{G_{k_{\Ll}}}$ is noncanonically isomorphic to $\C_\ell(m_\sigma)$. 

By \cite[III-30,III-44, III-48]{Ser89}, we have that $n_{\sigma^{-1}}=m_\sigma$. On the other hand, the comparison of \eqref{equation: TateRibet} and \eqref{equation: TateRibet2} imply that $r_\sigma=s_\sigma$. It thus only remains to show that $m_{\sigma}=s_{\overline \sigma}$. But this immediately follows from the isomorphism
$$
V_\sigma(A)\simeq V_{\overline \sigma}(A)^{\vee}(1)\,,
$$
where $V_{\overline\sigma}(A)^{\vee}$ denotes the dual of $V_{\overline\sigma}(A)$ (see the proof of \cite[Prop. 3.4]{Rib92}, which extends \emph{verbatim} to our more general setting).
\end{proof}

\begin{remark}
When $A$ is an abelian variety with CM, the $\delta_\lambda$ are the Hecke characters that give the $L$-function of $A$. In this particular case, Proposition \ref{proposition: RibetShimura} is the restatement that the infinity type of $A$ is the ``inverse CM type" of $A$. 
\end{remark}

\section{Proof of Theorem \ref{theorem: main3}} Suppose that $A$ is an abelian fourfold and $E$ an imaginary quadratic field that embeds into $\End(A_\Qbar)\otimes \Q$. Suppose that the pair $(A,E)$ has signature $(2,2)$. Choose a rational prime $\ell$ totally split in $E$, and let $\lambda,\overline\lambda$ denote the corresponding primes of $E$. Let $\sigma,\overline \sigma$ denote the embeddings of $E$ into $\Q_\ell$ attached to $\lambda,\overline\lambda$.
 We may enlarge $k$ so that it contains $E$, so that $E$ actually embeds in $\End(A)\otimes \Q$, and so that the field produced by Proposition \ref{proposition: Ogus} when applied to $\wedge^2\varrho_{A,\lambda}$ and $S_\ell$ is contained in $k$. We thus regard $R$ as set of primes of $k$, which by Proposition \ref{proposition: refinement} we can assume of density 1. We finally replace $k$ by its normal closure over $E$.

Let $\p$ be a prime of $R$. We claim that it is ordinary. Note that $p=\Nm(\p)$ is totally split in $E$. Let $\PP$ be a prime of $E$ above $\Nm(\p)$ and let $v$ denote an extension to $\overline \Z$ of the $\PP$-adic valuation $v_\PP$. Let us denote 
$$
b_{\p,\sigma}=\Tr(\wedge ^2V_\sigma(A)(\Frob_\p))\,,\qquad \delta_{\p,\sigma}:=\det(V_\sigma(A)(\Frob_\p))\,.
$$
As in the proof of Theorem \ref{theorem: main2}, one sees that $\delta_{\p,\sigma}\cdot\delta_{\p,\overline\sigma}=p^4$. We next claim that the hypothesis that the pair $(A,E)$ has signature $(2,2)$ implies that $v(\delta_{\p,\sigma})=v(\delta_{\p,\overline \sigma})=2$. By Proposition~\ref{proposition: RibetShimura}, the infinity type of $\delta_\lambda$ is $(2,2)$. Let $\Mm$ denote the modulus of~$\delta_\lambda$ and let $h$ denote the product of the class number of $k$ and the size of $(\Oo_k/\Mm)^\times$. Then there exists $\beta_\p$ in $E$ such that
$$
\Nm_{k/E}(\p)^h=(\beta_\p)\,,\quad  \beta_\p\equiv^{\times} 1\pmod \Mm\,,\quad\text{and}\quad \delta_{\p,\sigma}^h=\sigma(\beta_\p)^{2h}\overline\sigma(\beta_\p)^{2h}\,.
$$ 
The last equality implies that $v(\delta_{\sigma,\p})=v(\delta_{\overline\sigma,\p})$, which shows the claim.

As in the proof of Theorem~\ref{theorem: main2}, the fact that $\p$ belongs to~$R$ implies that we cannot simultaneously have $v(b_{\p,\sigma})\geq 1$ and $v(b_{\p,\overline\sigma})\geq 1$. By symmetry between $\sigma$ and $\overline \sigma$, we may assume that $v(b_{\p,\sigma})=0$. Then the usual Newton polygon argument ensures that $P_\p(V_\sigma(A),T)$ has two roots which are $v$-adic units. Since $v(\delta_{\p,\sigma})=2$ and the $v$-adic valuation of a root is at most $1$, the other two roots of  
$P_\p(V_\sigma(A),T)$ must have $v$-adic valuation equal to $1$. But this concludes the proof, as complex conjugation, which sends a root $\alpha_\p$ to $p/\alpha_\p$, interchanges the roots of $P_\p(V_\sigma(A),T)$ and $P_\p(V_{\overline\sigma}(A),T)$.

\begin{remark}
By \cite{Shi63} abelian fourfolds defined over a number field $k$ with multiplication by an imaginary quadratic field of signature $(2,2)$ exist. In fact, if $k$ has a real place, then as argued in \cite[Prop. 3.4]{Gui12} any abelian fourfold defined over~$k$ with multiplication by an imaginary quadratic field must have signature $(2,2)$.
\end{remark} 

\begin{remark}
Let $\zeta_3$ denote a primitive cubic root of unity. A source of examples of simple abelian threefolds with imaginary multiplication by $\Q(\zeta_3)$ are the Jacobians of Picard curves defined over a number field~$k$ containing $\Q(\zeta_3)$. These are curves $C$ of the form $y^3=f(x)$, where $f(x)$ is a separable polynomial in $k[x]$ of degree~4. By considering the action of $\Q(\zeta_3)$ on the basis of the regular differentials $dx/y,dx/y^2,xdx/y^2$ of $C$, one immediately finds that the signature is $(2,1)$ (for a generic Picard curve, see \cite{AFP21} for the determination of the value of the density of ordinary primes). By replacing $f(x)$ by a separable degree 5 polynomial, we obtain examples of abelian fourfolds with imaginary multiplication by $\Q(\zeta_3)$. Note however that Theorem \ref{theorem: main3} does not apply in this case, as the signature is $(3,1)$.
\end{remark}

\section{A Tate module tensor decomposition} Let $A$ be an abelian variety defined over~$k$ of dimension $g\geq 1$. Let $D$ be a simple algebra over $\Q$; denote by $K$ the center of $D$ and by~$t$ the Schur index of $D$. Suppose that $D$ is a subalgebra of $\End(A)\otimes \Q$ and let $E$ be a maximal subfield of $D$. Recall that $E$ has degree~$t$ over~$K$. Let~$\ell$ be a rational prime totally split in $E$, and let $\lambda$ be a prime of~$E$ above~$\ell$. As $D\otimes_K E_\lambda$ splits as $\M_t(E_\lambda)$, and acts on $V_\ell(A)\otimes_{K\otimes \Q_\ell} E_\lambda$ compatibly with $G_k$, we find a $G_k$-equivariant isomorphism
$$
V_\ell(A)\otimes_{K\otimes \Q_\ell} E_\lambda\simeq \bigoplus_{i=1}^t V\,,
$$
where $V$ is an $E_\lambda$-vector space of dimension $2g/(t[K:\Q])$. More canonically, by considering the obvious natural map, one sees that this isomorphism takes the form
\begin{equation}\label{equation: tensordec}
\big(D\otimes_E E_\lambda \big) \otimes_{E_\lambda} V_\lambda(A) \simeq V_\ell(A)\otimes_{K\otimes \Q_\ell} E_\lambda\,,
\end{equation}
where $V_\lambda(A)=V_\ell(A)\otimes_{E\otimes \Q_\ell} E_\lambda$. In the above expression, the tensor products $V_\lambda(A)$ and $V_\ell(A)\otimes_{K\otimes \Q_\ell} E_\lambda$ are taken with respect to compatibly chosen $\Q_\ell$-algebra maps $E\otimes \Q_\ell\rightarrow E_\lambda$ and $K\otimes \Q_\ell\rightarrow E_\lambda$.
Note that under the assumption that $D$ is contained in $\End(A)\otimes \Q$, the action of $G_k$ on $D$ is trivial. With the appropriate modifications, this isomorphism can be promoted to an isomorphism also in the more general situation where one only requires that $D$ be contained in $\End(A_\Qbar)\otimes \Q$ and $K$ be contained in $\End(A)\otimes \Q$ (this can be achieved as in \cite[Prop. 2.8]{FG19}, but we will not need it here).

\begin{remark}\label{remark: quatabfourfold}
It was mentioned in the Introduction that if $A$ is an abelian fourfold such that $\End(A_\Qbar)\otimes \Q$ is isomorphic to a quaternion algebra $D$ over $\Q$ and $E$ is a quadratic subfield of $D$, then $(A,E)$ has signature $(2,2)$. One way to see this is by using Proposition \ref{proposition: RibetShimura}, decomposition \eqref{equation: tensordec} with $K=\Q$, and noting that $\det(\varrho_{A,\ell})=\chi_\ell^2$.
\end{remark}
 
\section{Proof of Corollary \ref{corollary: main 5}} We may assume that $A$ is an abelian variety such that $\End(A)\otimes \Q$ contains a simple subalgebra~$D$ as above, where $K$ is now an imaginary quadratic field. Note that the action of $D$ on the $2g$-dimensional vector space $H^1(A_\C,\Q)$ imposes the divisibility condition $t^2\mid g$, which, together with the imposed constraints on~$t$, bounds the dimension $g$ as in the statement of the corollary. 

Let $\tau,\overline \tau$ denote the two embeddings of $K$ into $\Q_\ell$. We may assume that $\tau$ is the restriction to $K$ of the embedding $\sigma\colon E\hookrightarrow \Q_\ell$ associated to $\lambda$. Let $\overline\sigma\colon E\hookrightarrow \Q_\ell$ denote an embedding restricting to $\overline \tau$. Write $V_\tau(A)=V_\ell(A)_{K\otimes \Q_\ell,\tau}\Q_\ell$ and similarly for $V_{\overline \tau}(A)$.
From the isomorphism $V_\ell(A)\simeq V_{\tau}(A)\oplus V_{\overline \tau}(A)$ and decomposition \eqref{equation: tensordec}, for every prime $\p$ outside $S_\ell$, we find
\begin{equation}\label{equation: polypow}
P_\p(A,T)=P_\p(V_{\overline \tau}(A),T)\cdot P_\p(V_\tau(A),T)=\big(P_\p(V_\sigma(A),T)\cdot P_\p(V_{\overline\sigma}(A),T)\big)^t\,.
\end{equation}
Note that the polynomials $P_\p(V_{\sigma}(A),T)$ and $P_\p(V_{\overline\sigma}(A),T)$, which a priori have coefficients in $\Oo_E$, have in fact coefficients in $\Oo_K$. Moreover, their product $h_\p$ has coefficients in $\Z$. 

Note that $\deg(h_\p)=2g/t$. Therefore, when $t=g/2$, the corollary follows by applying Proposition \ref{proposition: Ogus} exactly in the same way as when one asserts the existence of a nonzero density set of ordinary primes for an abelian surface.  

When $t=g/3$, identity \eqref{equation: polypow} reduces the proof of the corollary to that of Theorem~\ref{theorem: main}. When $t=g/4$ and $(A,K)$ has signature $(g/2,g/2)$, Proposition \ref{proposition: RibetShimura} and decomposition \eqref{equation: tensordec} imply that $\det(\varrho_{A,\lambda})$ has infinity type $(2,2)$, and then the proof of the corollary reduces to that of Theorem \ref{theorem: main3}. 

\begin{remark}\label{remark: final}
From Theorem \ref{theorem: main3}, we deduced in the Introduction the abundance of ordinary primes for an abelian fourfold $A$ such that $\End(A_\Qbar)\otimes \Q$ is isomorphic to a quaternion algebra over $\Q$. An alternative proof of this fact can be obtained by using the method employed in the proof of Corollary~\ref{corollary: main 5}, but taking $K=\Q$ (which essentially reduces the question to the case of abelian surfaces).  
\end{remark}
\subsection*{Acknowlegements.} Thanks to Bjorn Poonen for explaining the content of \cite[Prop. 2.7.1]{Ogu82} to the attendants of the MIT number theory group meeting. Thanks to Drew Sutherland for driving my attention to the question considered in this note in the case of Picard curves. Thanks to Xavier Guitart for helpful discussions and for his comments and corrections to a previous version of this manuscript, as well as for alerting me of the existence of \cite{Suh20}. I was financially supported by the Simons Foundation grant 550033.

\end{document}